\newtheorem{theorem}{Theorem}[section]
\newtheorem{definition}[theorem]{Definition}
\newtheorem{proposition}[theorem]{Proposition}
\newtheorem{remark}[theorem]{Remark}
\renewenvironment{proof}[1][Proof]{ \noindent \textbf{#1: }}{$\Box$
\bigskip}
\begin{document}

\title{A Simple Reduction from a Biased Measure on
the Discrete Cube to the Uniform Measure}

\author{
Nathan Keller\thanks{The author was partially supported by the
Adams Fellowship Program of the Israeli Academy of Sciences and
Humanities and by
the Koshland Center for Basic Research.}\\
Faculty of Mathematics and Computer Science\\
Weizmann Institute of Science\\
P.O. Box~26, Rehovot 76100, Israel\\
{\tt nathan.keller@weizmann.ac.il}\\
}

\maketitle

\begin{abstract}

We show that certain statements related to the Fourier-Walsh
expansion of functions with respect to a biased measure on the
discrete cube can be deduced from the respective results for the
uniform measure by a simple reduction. In particular, we present
simple generalizations to the biased measure $\mu_p$ of the
Bonami-Beckner hypercontractive inequality, and of Talagrand's
lower bound on the size of the boundary of subsets of the discrete
cube. Our generalizations are tight up to constant factors.

\end{abstract}

\section{Introduction}

\begin{definition}
Consider the discrete cube $\{0,1\}^n$ endowed with the product measure
$\mu_p=(p \delta_{\{1\}} + (1-p) \delta_{\{0\}})^{\otimes n}$, and
let $f:\{0,1\}^n \rightarrow \mathbb{R}$. The Fourier-Walsh
expansion of $f$ with respect to the measure $\mu_p$ is the unique
expansion
\[
f = \sum_{S \subset \{1,2,\ldots,n\}} \alpha_S u_S,
\]
where for any $T \subset \{1,2,\ldots,n\}$,\footnote{Throughout
the paper, we identify elements of $\{0,1\}^n$ with subsets of
$\{1,2,\ldots,n\}$ in the natural way.}
\[
u_S(T)=\Big(-\sqrt{\frac{1-p}{p}} \Big)^{|S \cap T|}
\Big(\sqrt{\frac{p}{1-p}} \Big)^{|S \setminus T|}.
\]
In particular, for the uniform measure (i.e., $p=1/2$),
$u_S(T)=(-1)^{|S \cap T|}$. The coefficients $\alpha_S$ are
denoted by $\hat f(S)$.\footnote{Note that since the functions
$\{u_S\}_{S \subset \{1,\ldots,n\}}$ form an orthonormal basis,
the representation is indeed unique, and the coefficients are
given by the formula $\hat f(S) = \mathbb{E}_{\mu_p} [f \cdot
u_S]$ .}
\end{definition}
Properties of the Fourier-Walsh expansion are one of the main
objects of study in discrete harmonic analysis. Many of the
results in this field were obtained for the uniform measure on the
discrete cube. However, various applications (including
applications to random graph theory~\cite{Friedgut-Kalai}, to
hardness of approximation~\cite{Dinur-Safra}, and to other areas)
require consideration of a biased product measure on the discrete
cube. This led to a series of papers generalizing results from the
uniform measure $\mu_{1/2}$ to general biased measures $\mu_p$
(see,
e.g.,~\cite{Friedgut1,Hatami1,Keller-Kindler,Kindler-Thesis}). The
usual way to obtain such generalizations is to repeat the original
proof, replacing the analytic tools used in the uniform case (like
the Bonami-Beckner hypercontractive
inequality~\cite{Bonami,Beckner}) by their biased analogues. This
approach is effective, and is already considered quite standard.
However, it still requires thorough examination and adaptation of
the (sometimes complex) proofs of the results in the uniform
measure case.

\medskip

In this paper we study a simple reduction from the biased measure
$\mu_p$ to the uniform measure $\mu_{1/2}$. We note that this
reduction was already considered in several papers
(e.g.,~\cite{BKKKL}). We discuss the previous results and compare
them with our work in Section~\ref{sec:discussion}.

\medskip

Assume that $p=t/2^m$.\footnote{It is clear that there is no loss
of generality in assuming that $p$ is diadic, as the results for
general $p$ follow immediately by approximation. The exact
statement of our main result for a general $p$ is given in
Proposition~\ref{Prop:Non-Diadic}.} For any function $f:\{0,1\}^n
\rightarrow \mathbb{R}$ we define a function $Red(f)=
g:\{0,1\}^{mn} \rightarrow \mathbb{R}$ as follows: each $y \in
\{0,1\}^{mn}$ is considered as a concatenation of $n$ vectors $y^i
\in \{0,1\}^m$, and each such vector is translated to a natural
number $0 \leq Bin(y^i) < 2^m$ through its binary expansion (i.e.,
$Bin(y^i)=\sum_{j=0}^{m-1} 2^j \cdot y^i_{m-j}$). Then, for any $y
\in \{0,1\}^{mn}$,
\[
g(y)=g(y^1,y^2,\ldots,y^n):=f \left(h(y^1),h(y^2),\ldots,h(y^n) \right),
\]
where $h:\{0,1\}^m \rightarrow \{0,1\}$ is given by
\[
h(y^i) = \left\lbrace
  \begin{array}{c l}
    1, & Bin(y^i) \geq 2^m-t \\
    0, & Bin(y^i) < 2^m-t.
  \end{array}
\right.
\]
It is clear from the construction that the expectation of $g$
w.r.t. the uniform measure is equal to the expectation of $f$
w.r.t. the measure $\mu_p$. Furthermore, it was shown
in~\cite{Friedgut-Kalai} that the sum of influences\footnote{See
Section~\ref{sec:sub:lower-bound} for the definition of
influences.} of $g$ can be bounded {\it from above} by a simple
function of the sum of influences of $f$. This allows to
generalize to the biased measure statements concerning lower
bounds on the sum of influences, such as the KKL
theorem~\cite{KKL}.

\medskip

We show that for any $d$, the Fourier weight of $g$ on
the $d$-th level (i.e., $\sum_{|S|=d} \hat g(S)^2$) is bounded
{\it from below} in terms of the Fourier weight of $f$ on the
$d$-th level:
\begin{theorem}
Consider the discrete cube $\{0,1\}^n$ endowed with the product
measure $\mu_p$ for $p=t/2^m \leq 1/2$. Let $f:\{0,1\}^n
\rightarrow \mathbb{R}$, and let $Red(f)= g:\{0,1\}^{mn}
\rightarrow \mathbb{R}$ be obtained from $f$ by the construction
described above. For any $1 \leq d \leq n$,
\[
\sum_{|S|=d} \hat g(S)^2 \geq \Big(\frac{p \lfloor \log(1/p) \rfloor}{1-p} \Big)^d
\sum_{|S|=d} \hat f(S)^2,
\]
where the Fourier-Walsh coefficients of $g$ are w.r.t. the uniform
measure and the coefficients of $f$ are w.r.t. the measure
$\mu_p$. \label{Our-Theorem-Simple}
\end{theorem}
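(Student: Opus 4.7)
The plan is to expand $g$ explicitly in the uniform Fourier basis of $\{0,1\}^{mn}$ using the product structure of the reduction, and thereby to reduce the desired inequality to a bound on the level-one Fourier weight of the single-block function $h$. To set this up, observe that for any $x \in \{0,1\}^n$ and $S \subseteq [n]$ the biased character factorizes as $u_S(x) = \prod_{i \in S} \chi(x_i)$, where $\chi(0) := \sqrt{p/(1-p)}$ and $\chi(1) := -\sqrt{(1-p)/p}$. Substituting $x_i = h(y^i)$ into the $\mu_p$-Fourier expansion of $f$ gives
\[
g(y) \;=\; \sum_{S \subseteq [n]} \hat f(S) \prod_{i \in S} \phi(y^i), \qquad \phi(y^i) := \chi(h(y^i)).
\]
Under the uniform measure on $\{0,1\}^m$ the bit $h(y^i)$ is $\mu_p$-distributed, so $\mathbb{E}[\phi] = p\chi(1) + (1-p)\chi(0) = 0$, i.e., $\hat\phi(\emptyset) = 0$.

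Next I would compute $\hat g(B)$ for $B \subseteq [mn]$. Identifying $[mn]$ with $[n] \times [m]$ and setting $A_i := \{j : (i,j) \in B\}$ and $S_B := \{i : A_i \neq \emptyset\}$, the independence of the $n$ blocks together with $\hat\phi(\emptyset) = 0$ collapses the iterated expansion to
\[
\hat g(B) \;=\; \hat f(S_B) \prod_{i \in S_B} \hat\phi(A_i).
\]
Summing squares over $B$ with $|B| = d$ and keeping only the contribution with $|S_B| = d$ (the remaining contributions are nonnegative), the constraints $|A_i| \geq 1$ and $\sum_i |A_i| = d$ force each $|A_i| = 1$, and the inner sum factorizes as $W_1(\phi)^d$ with $W_1(\phi) := \sum_{|A|=1} \hat\phi(A)^2$. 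Hence
\[
\sum_{|B|=d} \hat g(B)^2 \;\geq\; W_1(\phi)^d \sum_{|S|=d} \hat f(S)^2.
\]

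The main obstacle is to show $W_1(\phi) \geq p \lfloor \log(1/p) \rfloor / (1-p)$. Since $\phi$ is affine in $h$, namely $\phi = -h/\sqrt{p(1-p)} + \sqrt{p/(1-p)}$, one has $\hat \phi(A) = -\hat h(A)/\sqrt{p(1-p)}$ for every nonempty $A$, so it suffices to prove $W_1(h) \geq p^2 \lfloor \log(1/p)\rfloor$. For this I would exploit the threshold structure of $h$: since $h(y) = 1$ iff $Bin(y) \geq 2^m - t$ and $p = t/2^m \leq 1/2$, the bound $t \leq 2^{m-k}$ holds for every $0 \leq k \leq \lfloor \log(1/p)\rfloor$, so $\{h=1\}$ forces the $\lfloor \log(1/p)\rfloor$ most significant bits of $y$ (those with weight at least $2^{m-\lfloor \log(1/p)\rfloor}$) to equal $1$. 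For each such coordinate $y_j$, the implication $h(y) = 1 \Rightarrow y_j = 1$ gives $\hat h(\{j\}) = \mathbb{E}[h(y)(-1)^{y_j}] = -\mathbb{E}[h] = -p$, contributing $p^2$ to $W_1(h)$. Summing over these $\lfloor \log(1/p)\rfloor$ coordinates yields the required bound and completes the proof.
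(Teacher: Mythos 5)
Your proposal is correct and follows essentially the same route as the paper: both exploit the block-product structure to get $\hat g(B)=\hat f(S_B)\prod_{i\in S_B}\hat\phi(A_i)$ (the paper's Proposition~\ref{Prop:Easy2}, proved there via linearity and multiplicativity over characters), and both rest on the same key observation that on $\{h=1\}$ the top $\lfloor\log(1/p)\rfloor$ bits are forced to $1$, so each such singleton contributes squared weight $p/(1-p)$ per block. The only difference is organizational: you compute just the level-one weight of $\phi$ rather than the paper's exact coefficient formula involving $a_{s}(t)$, which the paper only needs for its other remarks, not for Theorem~\ref{Our-Theorem-Simple} itself.
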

Combination of Theorem~\ref{Our-Theorem-Simple} with the results
of~\cite{Friedgut-Kalai} allows to generalize to the biased
measure statements concerning upper bounds on the Fourier weight
of the functions at any level, and statements combining such upper
bounds with lower bounds on the sum of influences. For example, we
obtain the following generalization of the Bonami-Beckner
hypercontractive inequality -- probably the most widely used tool
in discrete harmonic analysis.
\begin{proposition}\label{Prop:Beckner0}
Consider the discrete cube $\{0,1\}^n$ endowed with the product
measure $\mu_p$, $p \leq 1/2$, and let $T_{\delta}$ denote the
noise operator with rate $\delta$, defined by $T_{\delta}f=\sum_S
\delta^{|S|} \hat f(S) u_S$. For any function $f:\{0,1\}^n
\rightarrow \mathbb{R}$ and for any $0 \leq \delta \leq
\sqrt{\frac{p \lfloor \log(1/p) \rfloor}{1-p}}$,
\[
||T_{\delta}f||_2 \leq ||f||_{1+\frac{1-p}{p \lfloor \log(1/p)
\rfloor} \delta^2}.
\]
\end{proposition}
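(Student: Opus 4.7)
\noindent\textbf{Proof plan for Proposition~\ref{Prop:Beckner0}.}

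The plan is to reduce to the standard Bonami--Beckner hypercontractive inequality on the uniform cube $\{0,1\}^{mn}$ via the reduction $Red$. First I would assume for convenience that $p = t/2^m$ is dyadic (the general case follows by an approximation/continuity argument). Then I would form $g = Red(f) : \{0,1\}^{mn} \to \mathbb{R}$.

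The first key observation is that under the uniform measure on $\{0,1\}^{mn}$, each coordinate $h(y^i)$ is a Bernoulli$(p)$ variable (since there are exactly $t$ choices of $y^i \in \{0,1\}^m$ with $Bin(y^i) \geq 2^m - t$, out of $2^m$), and the $h(y^i)$ are independent. Consequently, for every $q \geq 1$,
\[
\|g\|_q^{\text{unif}} = \|f\|_q^{\mu_p},
\]
where the norms are with respect to the uniform measure on $\{0,1\}^{mn}$ and $\mu_p$ on $\{0,1\}^n$ respectively.

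Next, set $\delta' := \delta \sqrt{\frac{1-p}{p \lfloor \log(1/p)\rfloor}}$. The condition on $\delta$ in the proposition is exactly $\delta' \leq 1$, which is precisely the range in which the classical Bonami--Beckner inequality for the uniform measure applies:
\[
\|T_{\delta'} g\|_2^{\text{unif}} \leq \|g\|_{1+\delta'^2}^{\text{unif}}.
\]
Now I would relate the two noise operators via Theorem~\ref{Our-Theorem-Simple}. Expanding both sides in their respective Fourier bases:
\[
\|T_{\delta'} g\|_2^2 = \sum_{d=0}^{mn} \delta'^{2d} \sum_{|S|=d} \hat g(S)^2 \;\geq\; \sum_{d=0}^{n} \delta'^{2d} \Big(\tfrac{p\lfloor\log(1/p)\rfloor}{1-p}\Big)^d \sum_{|S|=d} \hat f(S)^2 = \sum_{d=0}^{n} \delta^{2d} \sum_{|S|=d} \hat f(S)^2 = \|T_{\delta} f\|_2^2.
\]
The choice of $\delta'$ was engineered so that the factor coming from Theorem~\ref{Our-Theorem-Simple} cancels the discrepancy in the noise rates exactly.

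Combining the three displayed relations gives $\|T_\delta f\|_2 \leq \|T_{\delta'} g\|_2 \leq \|g\|_{1+\delta'^2}^{\text{unif}} = \|f\|_{1+\delta'^2}^{\mu_p}$, and substituting $1+\delta'^2 = 1+\frac{1-p}{p\lfloor\log(1/p)\rfloor}\delta^2$ yields the claim. I do not anticipate a serious obstacle here, since Theorem~\ref{Our-Theorem-Simple} already does the substantive work; the only points that require care are verifying the distributional claim about $h(y^i)$ (so that $L^q$ norms match), checking that the parameter matching is exact so the allowed range of $\delta$ is precisely the hypercontractive range $\delta' \leq 1$, and handling the non-dyadic $p$ case by approximation (as hinted by the footnote referring to Proposition~\ref{Prop:Non-Diadic}).
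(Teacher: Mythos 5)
Your proposal is correct and follows essentially the same route as the paper: apply Theorem~\ref{Our-Theorem-Simple} to compare $\|T_\delta f\|_2$ with $\|T_{\delta'}g\|_2$ for $\delta'=\delta\sqrt{\tfrac{1-p}{p\lfloor\log(1/p)\rfloor}}$, invoke the uniform-measure Bonami--Beckner inequality for $g$, and use $\|g\|_q=\|f\|_q$ (which holds because the reduction pushes the uniform measure forward to $\mu_p$), with the non-dyadic case handled by approximation exactly as in the paper.
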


\noindent Another example is a lower bound on the size of the
vertex boundary of subsets of the discrete cube obtained by
Talagrand~\cite{Talagrand-Boundary}.
\begin{proposition}\label{Prop:Boundary0}
Consider the discrete cube $\{0,1\}^n$ endowed with the product
measure $\mu_p$, $p \leq 1/2$. There exists $\alpha>0$ such that
for any monotone subset $A \subset \{0,1\}^n$,
\begin{equation}
\mu_p(\partial A) \sum_{i=1}^n \mu_p(A_i) \geq \frac{c}{\lfloor
\log(1/p) \rfloor} \varphi \left(\mu_p(A) (1-\mu_p(A)) \right)
\psi \left(3 \lfloor \log(1/p) \rfloor \sum_{i=1}^n \mu_p(A_i)^2
\right),
\end{equation}
where $\partial A$ is the vertex boundary of $A$, $A_i$ is the
vertex boundary of $A$ in the $i$'th direction, $\varphi(x)=x^2
[\log(e/x)]^{1-\alpha}$, $\psi(x)=[\log(e/x)]^{\alpha}$, and $c>0$
is a universal constant.
\end{proposition}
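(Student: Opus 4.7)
The plan is to derive Proposition~\ref{Prop:Boundary0} from Talagrand's classical uniform-measure vertex-boundary inequality by applying the reduction $Red$ to the indicator function $f = \mathbf{1}_A$. By the approximation proposition I may assume the dyadic case $p = t/2^m$ with $m = \lfloor \log(1/p) \rfloor$. Setting $g = Red(f) = \mathbf{1}_{\tilde A}$, where $\tilde A = H^{-1}(A) \subset \{0,1\}^{mn}$ is the preimage of $A$ under the block map $H(y) = (h(y^1), \ldots, h(y^n))$, I observe that $h$ is a monotone threshold function, so $\tilde A$ is monotone in the uniform cube and $\mu_{1/2}(\tilde A) = \mu_p(A)$. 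The strategy is then to apply the classical Talagrand inequality to $\tilde A$ and translate each of the three quantities in it back to the biased setting.

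The three translation estimates I would establish are as follows. First, $\mu_{1/2}(\partial \tilde A) \leq \mu_p(\partial A)$, since every neighbor of $y \in \tilde A$ lying outside $\tilde A$ arises from a coordinate flip inside some block $i$ for which $H(y)$ has an $i$-th neighbor outside $A$, giving $\partial \tilde A \subseteq H^{-1}(\partial A)$. Second, $\sum_j \mu_{1/2}(\tilde A_j) \leq C \lfloor \log(1/p) \rfloor \sum_i \mu_p(A_i)$: block independence reduces $\mu_{1/2}(\tilde A_j)$ to $(\beta_j/p)\,\mu_p(A_i)$ for $j$ in block $i$, where $\beta_j$ is the normalized $j$-th directional boundary size of $h^{-1}(1)$ in $\{0,1\}^m$, and the bound follows from the Friedgut--Kalai key estimate $\sum_j \beta_j = I(h)/2 = O(p \log(1/p))$ for the threshold $h$. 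Third, $\sum_j \mu_{1/2}(\tilde A_j)^2 \leq 3 \lfloor \log(1/p) \rfloor \sum_i \mu_p(A_i)^2$, which follows from the elementary bound $\beta_j \leq p$ valid for $p \leq 1/2$, combined with $\sum_j \beta_j^2 \leq p \sum_j \beta_j$ and the previous estimate.

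Combining these, the uniform-measure left-hand side applied to $\tilde A$ is at most $C \lfloor \log(1/p) \rfloor \cdot \mu_p(\partial A) \sum_i \mu_p(A_i)$, and the uniform right-hand side is at least $c \varphi(\mu_p(A)(1-\mu_p(A)))\, \psi(3 \lfloor \log(1/p) \rfloor \sum_i \mu_p(A_i)^2)$, since $\psi(x) = [\log(e/x)]^{\alpha}$ is decreasing in its argument. Dividing by $C \lfloor \log(1/p) \rfloor$ yields the stated biased inequality with the advertised prefactor $c/\lfloor \log(1/p) \rfloor$. Theorem~\ref{Our-Theorem-Simple} enters this chain indirectly, through Proposition~\ref{Prop:Beckner0}: the biased hypercontractive inequality it provides is the version of the tool used in Talagrand's original uniform-measure proof, so the full suite of Fourier-analytic tools needed to run Talagrand's argument is available via the reduction.

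The main obstacle I anticipate is the careful bookkeeping of constants in the second and third estimates: pinning down the total-influence bound $\sum_j \beta_j = O(p \log(1/p))$ for the specific top-$t$ set $h^{-1}(1) = \{y : Bin(y) \geq 2^m - t\}$, and extracting the precise coefficient $3$ inside $\psi$. Conceptually these are elementary combinatorial facts about initial segments of binary order on $\{0,1\}^m$, but they must be tracked explicitly so that the factor $\log(1/p)$ propagates cleanly from Talagrand's uniform inequality to the final statement.
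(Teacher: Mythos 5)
Your proposal is correct and follows essentially the same route as the paper's proof: apply the reduction to $1_A$, establish the three comparison estimates ($\mu_{1/2}(\partial \tilde A) \leq \mu_p(\partial A)$, $\sum_j \mu_{1/2}(\tilde A_j) \leq 3\lfloor \log(1/p) \rfloor \sum_i \mu_p(A_i)$, and $\sum_j \mu_{1/2}(\tilde A_j)^2 \leq 3\lfloor \log(1/p) \rfloor \sum_i \mu_p(A_i)^2$), then invoke Talagrand's uniform-measure theorem together with the monotonicity of $\psi$; your per-block $\beta_j$ computation (using $\sum_j \beta_j \leq 3p\lfloor\log(1/p)\rfloor$ and $\beta_j \leq p$) simply re-derives what the paper cites as Theorem~\ref{Thm:Lower-Bound} and Proposition~\ref{Prop:Lower-Bound}. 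Two harmless slips worth fixing: for dyadic $p=t/2^m$ one cannot insist that $m=\lfloor\log(1/p)\rfloor$ (nothing in your argument uses this), and the closing remark about hypercontractivity is off the mark --- Theorem~\ref{Our-Theorem-Simple} and Proposition~\ref{Prop:Beckner0} play no role here, since Talagrand's uniform theorem is used as a black box.
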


\noindent Both Proposition~\ref{Prop:Beckner0} and
Proposition~\ref{Prop:Boundary0} are tight, up to constant
factors.

\medskip

We note that the main difference between the reduction technique
and the standard proof strategy presented above is that the
reduction allows to transform {\it the statement} directly to the
biased case, without considering the proof. In cases where the
proof in the uniform case is complex, like in the two propositions
above, the reduction simplifies the generalization to the biased
case significantly.

\medskip

This paper is organized as follows: The proofs of
Theorem~\ref{Our-Theorem-Simple} and of several other properties
of the simple reduction are presented in
Section~\ref{Sec:Reduction}. In Section~\ref{Sec:Applications} we
prove Propositions~\ref{Prop:Beckner0} and~\ref{Prop:Boundary0},
and present several other applications. Finally, we compare our
results with previous work and raise questions for further
research in Section~\ref{sec:discussion}.

\section{The Reduction}
\label{Sec:Reduction}

\subsection{Lower Bound on the Fourier Weight of $g=Red(f)$ on
Fixed Levels}

In this subsection we present the proof of
Theorem~\ref{Our-Theorem-Simple}. For the sake of clarity, we 
consider first the case $p=1/2^m$, where we establish a simple 
relation between the Fourier-Walsh coefficients of $g$ and the 
respective coefficients of $f$. Then we generalize the proof to any diadic
$p$, and finally we give a slightly weaker formulation of the
theorem that holds for a general $p$.

\medskip

\noindent Throughout the proof, $y=(y^1,y^2,\ldots,y^n)$ denotes an
element of $\{0,1\}^{mn}$. (Note that superscript indices, like
$y^i$, denote vectors in $\{0,1\}^m$, rather than single coordinates.) 
The functions we consider
are $f:\{0,1\}^n \rightarrow \mathbb{R}$ and
$Red(f)=g:\{0,1\}^{mn} \rightarrow \mathbb{R}$. All the
computations related to $f$ are w.r.t. the measure $\mu_p$, and
all the computations related to $g$ are w.r.t. the uniform measure
$\mu_{1/2}$.

\subsubsection{The case $p=1/2^m$}

In this case, the Fourier-Walsh coefficients of $g$ can be
expressed as a simple function of the coefficients of $f$.
\begin{proposition}
Assume that $p=1/2^m$. For any $S \subset \{1,2,\ldots,mn\}$,
denote $S_i=S \cap \{(i-1)m+1,(i-1)m+2,\ldots,im\}$. Let $S' =
\{i: |S_i|>0\}$, and denote $k=|S'|$. Then:
\[
\hat g(S) = \left(-\sqrt{\frac{p}{1-p}} \right)^k (-1)^{|S|} \hat
f(S').
\]
\label{Prop:Easy}
\end{proposition}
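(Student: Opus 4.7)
I would compute $\hat g(S)$ directly from
\[
\hat g(S) = \mathbb{E}_{y \sim \mu_{1/2}}\bigl[g(y)(-1)^{|S \cap y|}\bigr],
\]
after expanding $f$ in its $\mu_p$-Fourier basis. Writing $u_T^{(p)}(x) = \prod_{i \in T}\phi_p(x_i)$ with $\phi_p(1)=-\sqrt{(1-p)/p}$ and $\phi_p(0) = \sqrt{p/(1-p)}$, substitution yields $g(y) = \sum_T \hat f(T)\prod_{i \in T}\phi_p(h(y^i))$. Since the blocks $y^1,\ldots,y^n$ are independent under $\mu_{1/2}$ and the uniform character factors as $(-1)^{|S \cap y|} = \prod_i (-1)^{|S_i \cap y^i|}$, I can interchange sum and expectation and reduce the whole question to a product of $n$ single-block expectations $E_i(T)$.

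\textbf{Two easy cases.} When $i \notin T$, $E_i(T) = \mathbb{E}_{y^i}[(-1)^{|S_i \cap y^i|}] = \mathbf{1}_{\{S_i = \emptyset\}}$ by orthogonality of uniform characters. When $i \in T$ and $S_i = \emptyset$, $E_i(T) = \mathbb{E}_{y^i}[\phi_p(h(y^i))] = \mathbb{E}_{\mu_p}[\phi_p] = 0$, since $\phi_p = u_{\{1\}}^{(p)}$ is $\mu_p$-orthogonal to $u_{\emptyset} = 1$. These two cases already force $T = S' := \{i : S_i \neq \emptyset\}$ for any nonzero contribution.

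\textbf{Remaining case and assembly.} For $i \in T = S'$ I use the key feature of $p = 1/2^m$: we have $h(y^i)=1$ iff $y^i = \mathbf{1}$, a single point. Splitting $E_i(T)$ over $y^i = \mathbf{1}$ versus the remaining $2^m - 1$ points and applying $\sum_{y^i}(-1)^{|S_i \cap y^i|} = 0$ for nonempty $S_i$, a short calculation using the identity $\sqrt{p(1-p)} + p\sqrt{p/(1-p)} = \sqrt{p/(1-p)}$ gives $E_i(T) = (-1)^{|S_i|+1}\sqrt{p/(1-p)}$. Taking the product over $i \in S'$ and using $|S'| = k$ together with $\sum_{i \in S'}|S_i| = |S|$,
\[
\hat g(S) = \hat f(S')\prod_{i \in S'}(-1)^{|S_i|+1}\sqrt{p/(1-p)} = (-1)^{|S|}\Bigl(-\sqrt{p/(1-p)}\Bigr)^{k} \hat f(S'),
\]
which is the claim.

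\textbf{Main obstacle.} There is no real conceptual difficulty: the whole argument is powered by the special fact that $p = 1/2^m$ makes $h^{-1}(1)$ a single point, after which independence and the orthogonality of characters do all the work. The only delicate step is tracking signs and normalizations in the third case to confirm the stated formula. I would expect that for general diadic $p = t/2^m$ with $t > 1$, the single-block expectation no longer pinpoints a unique $T = S'$, so the clean equality here is presumably replaced by the inequality between Fourier weight levels stated in Theorem~\ref{Our-Theorem-Simple}.
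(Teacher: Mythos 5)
Your proposal is correct and follows essentially the same route as the paper: expand $f$ in the $\mu_p$-characters (the paper phrases this as reducing to $f=u_T$ by linearity), exploit the block/tensor structure under the uniform measure, and compute the single-block coefficient using the special fact that $h^{-1}(1)=\{\mathbf{1}\}$ when $p=1/2^m$ together with the vanishing of $\sum_{y^i}(-1)^{|S_i\cap y^i|}$ for nonempty $S_i$. Your explicit case analysis forcing $T=S'$ (via $\mathbb{E}_{\mu_p}[\phi_p]=0$ and orthogonality of uniform characters) is a slightly more careful rendering of the step the paper summarizes as multiplicativity, and your sign bookkeeping checks out.
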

\begin{proof}
First, we note that since both the Fourier transform and the
construction of $g$ are linear, we have (for all $f_1,f_2,\alpha,$
and $\beta$), that if $g=Red(\alpha \cdot f_1+ \beta \cdot f_2)$,
then for any $S$,
\[
\hat g(S) = \alpha \cdot \widehat{Red(f_1)}(S) + \beta \cdot
\widehat{Red(f_2)}(S).
\]
Thus, it is sufficient to prove the assertion for the characters
$\{u_T\}_{T \subset \{1,\ldots,n\}}$, which form a basis for the
space of all functions from $\{0,1\}^n$ to $\mathbb{R}$.

\medskip

\noindent Second, we note that by the structure of the characters,
if $f=u_T$ and $g=Red(f)$, then:
\[
g(y^1,y^2,\ldots,y^n)=u_T \left(h(y^1),h(y^2),\ldots,h(y^n)
\right) = \prod_{i \in T} u_{\{i\}} (h(y^i)).
\]
Hence, we can ``decompose'' the function $g$ into functions
$\{g_i:\{0,1\}^m \rightarrow \mathbb{R}\}_{i=1,2,\ldots,n}$,
defined by $g_i(y^i)=u_{\{i\}}(h(y^i))$, and then by the
properties of the Fourier transform, we have that for any $S$,
\[
\hat g(S) = \prod_{i \in T} \hat{g_i} (S_i).
\]
Therefore, it is sufficient to prove the assertion for the
characters $u_{\{i\}}$ for $i=1,2,\ldots,n$, and the result
follows by multiplicativity.

\medskip

\noindent Third, by symmetry, it is sufficient to consider $i=1$,
and thus we may assume w.l.o.g. that $S \subset \{1,2,\ldots,m\}$.
In this case, since we clearly have $\widehat{u_{\{1\}}} (\{1\}) =
1$, the assertion of the proposition is reduced to the following:
\begin{equation}\label{Neweq2.1}
\hat g_1(S) = \left(-\sqrt{\frac{p}{1-p}} \right) \cdot
(-1)^{|S|},
\end{equation}
where $g_1=Red(u_{\{1\}})$.

\medskip

\noindent Finally, Equation~(\ref{Neweq2.1}) is obtained by simple
computation. Indeed, since $p=1/2^m$, by the definition of the
reduction we have $g_1(y^1)=-\sqrt{(1-p)/p}$ if
$y^1=(1,1,\ldots,1)$ and $g_1(y^1)=\sqrt{p/(1-p)}$ otherwise.
Since for any $S \subset \{1,2,\ldots,n\}$ we have $\mathbb{E}[u_S]=0$, 
we can write:
\begin{align}\label{Neweq2.2}
\begin{split}
\hat g_1(S) &= \mathbb{E}_{y^1 \in \{0,1\}^m} (g_1(y^1) \cdot
u_S(y^1)) = \mathbb{E}_{y^1 \in \{0,1\}^m}
((g_1(y^1)-\sqrt{p/(1-p)}) \cdot u_S(y^1)) \\
&= \frac{1}{2^m} \cdot (-\sqrt{(1-p)/p}-\sqrt{p/(1-p)}) \cdot
u_S((1,1,\ldots,1)) = p \cdot \frac{-1}{\sqrt{p(1-p)}} \cdot
(-1)^{|S|} \\
&= \left(-\sqrt{\frac{p}{1-p}} \right) \cdot (-1)^{|S|},
\end{split}
\end{align}
as asserted.
\end{proof}

\subsubsection{The case $p=t/2^m$}

In this case, the relation between the Fourier-Walsh coefficients
of $g$ and the corresponding coefficients of $f$ is a bit more
complex.
\begin{proposition}\label{Prop:Easy2}
Assume that $p=t/2^m$. For $j=1,2,\ldots,m$, let
\[
a_j(t)=\min \left(t \mbox{ mod } 2^{m-j+1}, 2^{m-j+1}-t \mbox{ mod
} 2^{m-j+1} \right).
\]
For any $S \subset \{1,2,\ldots,mn\}$, denote $S_i=S \cap
\{(i-1)m+1,(i-1)m+2,\ldots,im\}$, and $s_i=\max (S_i) - (i-1)m$.
Let $S' = \{i: |S_i|>0\}$, and denote $k=|S'|$. We have
\begin{equation}\label{Neweq2.3}
|\hat g(S)| = \left(\prod_{i \in S'} \frac{a_{s_i}(t)}{t} \right)
\cdot \left(\sqrt{\frac{p}{1-p}} \right)^k |\hat f(S')|.
\end{equation}
Furthermore, if for all $1 \leq i \leq n$, $s_i \leq \lfloor
\log(1/p) \rfloor$, then:
\begin{equation}\label{Eq2.9}
\hat g(S) = \left(-\sqrt{\frac{p}{1-p}} \right)^k (-1)^{|S|} \hat
f(S').
\end{equation}
\end{proposition}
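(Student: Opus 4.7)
The plan is to reuse the three-step reduction of Proposition~\ref{Prop:Easy} verbatim and to replace only the single-block Fourier computation. By linearity of both the Fourier-Walsh transform and the map $Red$, it suffices to prove both assertions when $f=u_T$ for some $T\subseteq\{1,\ldots,n\}$; the factorization $u_T(x)=\prod_{i\in T}u_{\{i\}}(x_i)$ decomposes $g$ as a tensor product, over blocks, of the single-block functions $g_i(y^i):=u_{\{i\}}(h(y^i))$, so that $\hat g(S)=\prod_{i=1}^n\hat g_i(S_i)$. Since $\hat g_i(\emptyset)=\mathbb{E}_{\mu_p}[u_{\{i\}}]=0$ for $i\in T$ and $g_i\equiv 1$ for $i\notin T$, the product vanishes unless $S'=T$, matching the factor $\hat f(S')$ in both~(\ref{Neweq2.3}) and~(\ref{Eq2.9}). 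Thus the products over $i\in S'$ in the claim will follow by multiplicativity once we compute $\hat g_1(S_1)$ for a single block $S_1\subseteq\{1,\ldots,m\}$.

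For the single-block computation, following~(\ref{Neweq2.2}), subtract the constant $\sqrt{p/(1-p)}$ so that $g_1-\sqrt{p/(1-p)}$ equals $-1/\sqrt{p(1-p)}$ on $B:=\{y\in\{0,1\}^m:Bin(y)\geq 2^m-t\}$ and vanishes elsewhere. Since $S_1\neq\emptyset$, this yields
\[
\hat g_1(S_1) \;=\; -\frac{1}{2^m\sqrt{p(1-p)}}\sum_{y\in B}u_{S_1}(y),
\]
and the task reduces to evaluating the character sum $\Sigma:=\sum_{y\in B}u_{S_1}(y)$. Setting $s:=s_1=\max S_1$, partition $\{0,\ldots,2^m-1\}$ into $2^{s-1}$ dyadic intervals of length $2^{m-s+1}$ aligned at multiples of $2^{m-s+1}$. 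Within any such interval the bits of $y$ of weight exceeding $2^{m-s}$ are constant, while the bit $y_s$ (of weight $2^{m-s}$ and belonging to $S_1$) flips at the midpoint; hence $u_{S_1}$ takes opposite signs on the two halves, and its sum over any complete interval is zero. Writing $t=q\cdot 2^{m-s+1}+r$ with $0\leq r<2^{m-s+1}$, the set $B$ decomposes into $q$ complete top intervals (each contributing $0$) plus the top $r$ elements of the next interval below; a short case analysis on whether $r\leq 2^{m-s}$ or $r>2^{m-s}$ shows that the remaining character sum has magnitude $\min(r,2^{m-s+1}-r)=a_s(t)$, and combining with multiplicativity over blocks gives~(\ref{Neweq2.3}).

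For~(\ref{Eq2.9}), the hypothesis $s_i\leq\lfloor\log(1/p)\rfloor$ is equivalent to $t\leq 2^{m-s_i}$, which forces $q=0$ and $r=t\leq 2^{m-s_i}$ in every block. All $t$ elements of $B$ thus lie in the upper half of the last dyadic interval $[2^m-2^{m-s+1},2^m-1]$, where $y_1=\cdots=y_{s-1}=1$ (forced by $N\geq 2^m-2^{m-s+1}$) and $y_s=1$ (the defining property of the upper half). Consequently $y_j=1$ for every $j\in S_1$, so $u_{S_1}\equiv(-1)^{|S_1|}$ on $B$ and $\Sigma=t(-1)^{|S_1|}$. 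Substituting back gives $\hat g_1(S_1)=-\sqrt{p/(1-p)}\,(-1)^{|S_1|}$, and multiplying across blocks yields~(\ref{Eq2.9}).

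The main obstacle is the bookkeeping in the partial interval at the boundary of $B$: one must align the dyadic decomposition determined by $s=\max S_1$ with the top-$t$ window, and verify that both subcases $r\leq 2^{m-s}$ and $r>2^{m-s}$ produce the symmetric quantity $a_s(t)$. For the signed formula~(\ref{Eq2.9}), the extra observation needed is that in the regime $s_i\leq\lfloor\log(1/p)\rfloor$ the relevant binary digits of $N$ on $B$ are all $1$, so no cancellation occurs and the sign collapses cleanly to $(-1)^{|S|}$.
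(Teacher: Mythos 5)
Your proposal is correct and follows essentially the same route as the paper: reduce by linearity and tensorization to the single-block character $u_{\{1\}}$, evaluate the character sum over the top-$t$ window by cancellation over aligned dyadic intervals of length $2^{m-s+1}$ plus a boundary case analysis giving $\min(r,2^{m-s+1}-r)=a_s(t)$, and for the signed identity~(\ref{Eq2.9}) use that $t\leq 2^{m-s}$ forces all coordinates of index at most $s$ to equal one on $B$. The details (the subtraction of $\sqrt{p/(1-p)}$, the equivalence $s_i\leq\lfloor\log(1/p)\rfloor\Leftrightarrow t\leq 2^{m-s_i}$, and the block multiplicativity) all check out.
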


\begin{proof}
As in the case $p=1/2^m$, it is sufficient to prove the assertion
for $f=u_{\{1\}}$ and $S \subset \{1,2,\ldots,m\}$. Denote
$Red(u_{\{1\}})=g_1$. By the definition of the reduction, we have
$g_1(y^1)=-\sqrt{(1-p)/p}$ if $Bin(y^1) \geq 2^m-t$ and
$g_1(y^1)=\sqrt{p/(1-p)}$ otherwise. Thus, a computation similar
to that given in Equation~(\ref{Neweq2.2}) shows that for any $S
\subset \{1,2,\ldots,m\}$,
\begin{align*}
\hat g_1(S) &= \sum_{\{y^1: Bin(y^1) \geq 2^m-t\}} 2^{-m} \cdot
\frac{-1}{\sqrt{p(1-p)}} \cdot u_{S}(y^1) \\
&= \frac{1}{t} \cdot \left(-\sqrt{\frac{p}{1-p}}\right) \cdot
\sum_{\{y^1: Bin(y^1) \geq 2^m-t\}} u_{S}(y^1).
\end{align*}
Thus, Equation~(\ref{Neweq2.3}) would follow once we show that
\begin{equation}\label{Neweq2.4}
\left|\sum_{\{y^1: Bin(y^1) \geq 2^m-t\}} u_{S}(y^1) \right|
=a_{s}(t),
\end{equation}
where $s=\max(S)$.

\medskip

\noindent In order to compute the left hand side of
Equation~(\ref{Neweq2.4}), we note that for any $l \in
\mathbb{N}$,
\[
\sum_{\{y^1:l \cdot 2^{m-s+1} \leq Bin(y^1) < (l+1)2^{m-s+1}\}}
u_{S}(y^1)=0.
\]
Indeed, each such sequence of $2^{m-s+1}$ consecutive values of
$y^1$ is composed of the two sequences
\[
\{y^1:2l \cdot 2^{m-s} \leq Bin(y^1) < (2l+1)2^{m-s}\} \quad
\mbox{ and } \quad \{y^1:(2l+1)2^{m-s} \leq Bin(y^1) <
(2l+2)2^{m-s}\}.
\]
Inside each of the sequences, the vectors $y^1$ differ only in
coordinates that are not included in $S$, and thus the value of
$u_S(y^1)$ is equal for all elements of the sequence. The only
difference between the sequences is in the $s$'s coordinate that
is included in $S$, and hence, the sums of $u_{S}(y^1)$ over the
sequences cancel each other. Thus, due to the cancelation we have:
\[
\sum_{\{y^1: Bin(y^1) \geq 2^m-t\}} u_{S}(y^1) = \sum_{y^1 \in V} u_{S}(y^1),
\]
where
\[
V=\{y^1:
2^m-t \leq Bin(y^1) \leq 2^m-t + \left((t-1) \mbox{ mod }
2^{m-s+1} \right)\}.
\]
Using the same argument we see that if $(t \mbox{ mod } 2^{m-j+1}
\geq 2^{m-j})$, then the value of $u_S(y^1)$ is the same for all
$y^1 \in V$, and thus,
\[
\left| \sum_{y^1 \in V} u_{S}(y^1) \right| = 2^{m-s+1} -
(t \mbox{ mod } 2^{m-s+1}).
\]
Similarly, if $(t \mbox{ mod } 2^{m-s+1} < 2^{m-s})$, then part of
the elements of the sum cancel each other, and we obtain:
\[
\left| \sum_{y^1 \in V} u_{S}(y^1) \right| = t \mbox{
mod } 2^{m-s+1}.
\]
Combining the two cases, we get:
\begin{equation}\label{Eq2.7}
\left| \sum_{\{y^1: Bin(y^1) \geq 2^m-t\}} u_{S}(y^1) \right|= 
\left| \sum_{y^1 \in V} u_{S}(y^1) \right| = \min
\left(t \mbox{ mod } 2^{m-s+1}, 2^{m-s+1}-t \mbox{ mod } 2^{m-s+1}
\right) = a_s(t),
\end{equation}
proving Equation~(\ref{Neweq2.4}).

\bigskip

\noindent If $\max(S) \leq \lfloor \log(1/p) \rfloor$, the
expression is much simpler. We note that for all $1 \leq j \leq
\lfloor \log(1/p) \rfloor$, the $j$-th coordinate of all the
vectors $y^1 \in \{0,1\}^m$ such that $Bin(y^1) \geq 2^m-t$, is
one. Thus, if $S \subset \{1,2,\ldots,\lfloor \log(1/p)
\rfloor\}$, then $u_{S}(y^1)=(-1)^{|S|}$ for all $y^1$ with
$Bin(y^1) \geq 2^m-t$. Therefore, in this case we have:
\begin{equation}\label{Eq2.8}
\sum_{\{y^1: Bin(y^1) \geq 2^m-t\}} u_{S}(y^1)=(-1)^{|S|} \cdot t,
\end{equation}
and this implies Equation~(\ref{Eq2.9}), completing the proof of
Proposition~\ref{Prop:Easy2}.
\end{proof}

\bigskip

\noindent Theorem~\ref{Our-Theorem-Simple} follows immediately
from Proposition~\ref{Prop:Easy2}. Indeed, setting:
\begin{align*}
A_{\{i_1,i_2,\ldots,i_k\}} = \{S \subset \{1,\ldots,n\}:(|S|=k)
\wedge (S'=\{i_1,\ldots,i_k\}) \wedge (\forall i \in S', s_i \leq
\lfloor \log(1/p) \rfloor) \},
\end{align*}
we have (by Equation~(\ref{Eq2.9})), that:
\begin{align*}
\sum_{|S|=k} \hat g(S)^2 &\geq \sum_{\{i_1,i_2,\ldots,i_k\}
\subset \{1,\ldots,n\}} \left( \sum_{S \in A_{\{i_1,i_2,\ldots,i_k\}}} \hat g(S)^2 \right) \\
&= \sum_{\{i_1,i_2,\ldots,i_k\} \subset \{1,\ldots,n\}} (\lfloor
\log(1/p) \rfloor)^k \left( \frac{p}{1-p} \right)^k \hat
f(\{i_1,i_2,\ldots,i_k\})^2
\\
&= \left( \frac{p \lfloor \log(1/p) \rfloor}{1-p} \right)^k
\sum_{|S'|=k} \hat f(S')^2,
\end{align*}
as asserted.

\subsubsection{The Non-diadic Case}

In the case of a non-diadic $p$, we can use approximation to get a
slightly weaker variant of Theorem~\ref{Our-Theorem-Simple}.
\begin{proposition}\label{Prop:Non-Diadic}
Consider the discrete cube $\{0,1\}^n$ endowed with the product
measure $\mu_p$, $p \leq 1/2$. For any function $f:\{0,1\}^n
\rightarrow \mathbb{R}$, and for any $\epsilon>0$, there exists $m
\in \mathbb{N}$ and a function $g:\{0,1\}^{mn} \rightarrow
\mathbb{R}$, such that:
\begin{itemize}
\item $|\mathbb{E}[g]-\mathbb{E}[f]|<\epsilon$, and

\item For all $1 \leq d \leq n$,
\[
\sum_{|S|=d} \hat g(S)^2 \geq \Big(\frac{p \lfloor \log(1/p) \rfloor}{1-p} \Big)^d
\sum_{|S|=d} \hat f(S)^2 - \epsilon,
\]
\end{itemize}
where the expectation and the Fourier-Walsh coefficients of $g$ are w.r.t. the uniform
measure and the expectation and the coefficients of $f$ are w.r.t. the measure
$\mu_p$.
\end{proposition}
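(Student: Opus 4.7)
The strategy is to approximate the non-diadic $p$ by a nearby diadic rational $p'$, apply Theorem~\ref{Our-Theorem-Simple} to the reduction built with parameter $p'$, and then use continuity in the biasing parameter to transfer the resulting bound back to quantities computed with respect to $\mu_p$. If $p$ is itself diadic, Theorem~\ref{Our-Theorem-Simple} already gives the statement with $\epsilon=0$, so assume $p$ is non-diadic. Writing $\log$ for $\log_2$, there is a unique integer $k\geq 1$ with $2^{-(k+1)}<p<2^{-k}$; on the whole open interval $I=(2^{-(k+1)},2^{-k})$ the quantity $\lfloor\log(1/q)\rfloor$ equals $k$. This local constancy is the key technical point: as a function of $p$, the factor $\lfloor\log(1/p)\rfloor$ is only lower semi-continuous, and the main obstacle is to approximate $p$ without crossing a power of $1/2$.

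Using density of the diadics, pick $p'=t/2^m\in(p,2^{-k})\subset I$, arbitrarily close to $p$. For any such $p'$ we have both $\lfloor\log(1/p')\rfloor=\lfloor\log(1/p)\rfloor=k$ and $p'/(1-p')\geq p/(1-p)$, so
\[
\frac{p'\lfloor\log(1/p')\rfloor}{1-p'}\;\geq\;\frac{p\lfloor\log(1/p)\rfloor}{1-p}.
\]
Let $g=Red(f):\{0,1\}^{mn}\to\mathbb{R}$ be the reduction associated with this $p'$, so that $\mathbb{E}_{\mu_{1/2}}[g]=\mathbb{E}_{\mu_{p'}}[f]$. Applying Theorem~\ref{Our-Theorem-Simple} to $f$ viewed as a function on $(\{0,1\}^n,\mu_{p'})$ yields, for every $1\leq d\leq n$,
\[
\sum_{|S|=d}\hat g(S)^2\;\geq\;\Big(\frac{p'\lfloor\log(1/p')\rfloor}{1-p'}\Big)^d\sum_{|S|=d}\hat f^{(p')}(S)^2\;\geq\;\Big(\frac{p\lfloor\log(1/p)\rfloor}{1-p}\Big)^d\sum_{|S|=d}\hat f^{(p')}(S)^2,
\]
where $\hat f^{(q)}(S)$ denotes the Fourier--Walsh coefficient of $f$ with respect to $\mu_q$.

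The final step is a routine continuity argument. For each fixed $T,S\subset\{1,\ldots,n\}$, both $\mu_q(T)$ and $u_S^{(q)}(T)$ are continuous (in fact, explicit elementary) functions of $q\in(0,1)$, hence so is $\hat f^{(q)}(S)=\mathbb{E}_{\mu_q}[f\cdot u_S^{(q)}]$. Because $f$ is fixed and there are only finitely many pairs $(S,d)$ with $S\subset\{1,\ldots,n\}$ and $1\leq d\leq n$, there exists $\delta>0$ such that any $p'\in I$ with $|p'-p|<\delta$ satisfies simultaneously $|\mathbb{E}_{\mu_{p'}}[f]-\mathbb{E}_{\mu_p}[f]|<\epsilon$ and, for every $1\leq d\leq n$,
\[
\Big(\frac{p\lfloor\log(1/p)\rfloor}{1-p}\Big)^d\sum_{|S|=d}\hat f^{(p')}(S)^2\;\geq\;\Big(\frac{p\lfloor\log(1/p)\rfloor}{1-p}\Big)^d\sum_{|S|=d}\hat f^{(p)}(S)^2-\epsilon.
\]
Choosing any diadic $p'\in(p,\min(p+\delta,2^{-k}))$ and the associated $g=Red(f)$ then yields the two bullet points of the proposition. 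Beyond the bookkeeping around the jumps of $\lfloor\log(1/q)\rfloor$ --- which we sidestep by confining $p'$ to $I$ and approaching $p$ from above --- no further ingredient is needed.
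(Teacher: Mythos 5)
Your proof is correct and follows essentially the same route as the paper: approximate $p$ by a nearby diadic $p'$, apply Theorem~\ref{Our-Theorem-Simple} for $\mu_{p'}$, and use continuity in the bias parameter of $\mathbb{E}_{\mu_q}[f]$ and of the level-$d$ Fourier weights to transfer the bound back to $\mu_p$. Your extra care in keeping $p'$ within the same diadic interval (so that $\lfloor\log(1/p')\rfloor=\lfloor\log(1/p)\rfloor$, and approaching from above so the prefactor only improves) just makes explicit a point the paper's shorter continuity argument leaves implicit.
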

\begin{proof}
For any function $f:\{0,1\}^n \rightarrow \mathbb{R}$, the maps $p
\mapsto \mathbb{E}_{\mu_p}(f)$ and $p \mapsto \sum_{|S|=d} \hat
f_{\mu_p}(S)^2$ (where $\hat f_{\mu_p}(S)$ denotes coefficients
w.r.t. the measure $\mu_p$) are uniformly continuous (as functions
of $p$) in $[0,1]$. Therefore, for any $\epsilon>0$, we can choose
a diadic $p'=t/2^m$ close enough to $p$ such that the function
$g:\{0,1\}^{mn} \rightarrow \mathbb{R}$ constructed from $f$ by
the reduction procedure w.r.t. the measure $\mu_{p'}$ will
satisfy the conditions of the proposition.
\end{proof}

\subsection{Upper Bound on the Influences of $g$}
\label{sec:sub:lower-bound}

Before we turn to the applications, we present the result of
Friedgut and Kalai~\cite{Friedgut-Kalai} mentioned in the
introduction that allows to bound the sum of influences of $g$
from above in terms of the influences of $f$, and obtain a variant
of that result that allows to bound the sum of squares of the
influences. We note that while the result of~\cite{Friedgut-Kalai}
generalizes to the continuous cube with the Lebesgue measure (as
shown in~\cite{Keller-Cont-Influence}), the proof of the bound on
the squares of influences holds only in the discrete setting.
\begin{definition}
Consider the discrete cube $\{0,1\}^n$ endowed with the product
measure $\mu_p$. Let $f:\{0,1\}^n \rightarrow \{0,1\}$. For $1
\leq i \leq n$, the influence of the $i$-th coordinate on $f$ is
\[
I_i(f)=\Pr_{x \sim \mu_p} [f(x) \neq f(x \oplus e_i)],
\]
where $x\oplus e_i$ denotes the vector obtained from $x$ by
replacing $x_i$ by $1-x_i$ and leaving the other coordinates
unchanged.

\medskip

\noindent For a set $A \subset \{0,1\}^n$, we define
$I_i(A)=I_i(1_A)$.
\end{definition}
\begin{theorem}[Friedgut and Kalai]\label{Thm:Lower-Bound}
Consider the discrete cube $\{0,1\}^n$ endowed with the product
measure $\mu_p$, for $p=t/2^m \leq 1/2$. Let $f:\{0,1\}^n
\rightarrow \{0,1\}$, and let $g:\{0,1\}^{mn} \rightarrow \{0,1\}$
be obtained from $f$ by the construction described in the
introduction. Then
\[
\sum_{i=1}^{mn} I_i(g) \leq 6 p \lfloor \log(1/p) \rfloor
\sum_{i=1}^n I_i(f),
\]
where the influences on $f$ are w.r.t. $\mu_p$, and the influences
on $g$ are w.r.t. $\mu_{1/2}$.
\end{theorem}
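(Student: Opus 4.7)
My plan is to decompose the influences of $g$ block-by-block, exploiting the product structure of the reduction, and then to bound the total influence (w.r.t.\ $\mu_{1/2}$) of the auxiliary threshold function $h:\{0,1\}^m\to\{0,1\}$ appearing in the construction.

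The first step is a product formula: for a coordinate $j=(i-1)m+r$ lying in the $i$-th block of $y$,
\[
I_j(g)=I_r(h)\cdot I_i(f),
\]
where the influences of $g$ and $h$ are taken w.r.t.\ $\mu_{1/2}$ and those of $f$ w.r.t.\ $\mu_p$. To see this, note that $g(y)\neq g(y\oplus e_j)$ forces $h(y^i)\neq h(y^i\oplus e_r)$, since otherwise the two inputs to $f$ coincide. When $h$ does flip on the $r$-th bit of $y^i$, the two inputs to $f$ differ exactly in coordinate $i$, so $g$ flips iff $f$ is sensitive to coordinate $i$ at the point $(h(y^{i'}))_{i'\neq i}$. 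These two events depend respectively on $y^i$ alone and on $(y^{i'})_{i'\neq i}$ alone, hence are independent under $\mu_{1/2}$. Noting that $(h(y^{i'}))_{i'\neq i}$ is distributed as $\mu_p^{\otimes(n-1)}$, the second probability equals $I_i(f)$, yielding the factorization.

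Next I would bound $I_r(h)$ in two complementary ways. Setting the threshold $T=2^m-t$, a boundary edge of $h$ in direction $r$ requires $y_r=0$ and $Bin(y)\in[T-2^{m-r},T)$, yielding at most $2^{m-r}$ such $y$ and hence $I_r(h)\le 2^{1-r}$. Separately, since $h$ has mean $p$, both $\Pr[h(y)=0,h(y\oplus e_r)=1]$ and $\Pr[h(y)=1,h(y\oplus e_r)=0]$ are bounded by $\Pr[h=1]=p$, giving $I_r(h)\le 2p$. Summing $\min(2^{1-r},2p)$ over $r$: the first $\lfloor\log(1/p)\rfloor$ terms contribute at most $2p\lfloor\log(1/p)\rfloor$ via the bound $2p$, while the remaining tail is a geometric series bounded by $2\cdot 2^{-\lfloor\log(1/p)\rfloor}\le 4p$. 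Since $p\le 1/2$ gives $\lfloor\log(1/p)\rfloor\ge 1$, altogether $\sum_{r=1}^m I_r(h)\le 6p\lfloor\log(1/p)\rfloor$. Multiplying by $\sum_i I_i(f)$ via the product formula yields the claimed inequality.

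The main conceptual step is the product formula of the second paragraph, which hinges on the clean separation of the block $y^i$ from the other blocks under the uniform measure; once it is in hand, the bound on $\sum_r I_r(h)$ reduces to the elementary combination of the two estimates above.
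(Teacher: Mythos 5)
Your proof is correct and follows essentially the same route as the paper, which cites Friedgut--Kalai for this theorem and reproduces the argument for the squared-influence variant in Proposition~\ref{Prop:Lower-Bound}: a coordinate-wise bound $I_{(i-1)m+r}(g)\le \min\left(2p,\,2^{2-r}\right)I_i(f)$ followed by summation over each block. Your exact factorization $I_{(i-1)m+r}(g)=I_r(h)\,I_i(f)$ together with $I_r(h)\le\min\left(2p,\,2^{1-r}\right)$ simply spells out (and slightly sharpens) the step the paper declares ``easy to see,'' and it yields the stated constant $6$.
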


\begin{proposition} \label{Prop:Lower-Bound}
Let $f,g$ be as defined in Theorem~\ref{Thm:Lower-Bound}. Then
\[
\sum_{i=1}^{mn} I_i(g)^2 \leq 12 p^2 \lfloor \log(1/p) \rfloor
\sum_{i=1}^n I_i(f)^2.
\]
\end{proposition}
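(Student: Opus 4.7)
The plan is to exploit the block structure of the reduction. Write an arbitrary coordinate of $g$ as a pair $(i',j)$ with $1\leq i'\leq n$ indexing the block and $1\leq j\leq m$ the position within the block, and set
\[
q_j := \Pr_{y\in\{0,1\}^m}[h(y)\neq h(y\oplus e_j)],
\]
where $h$ is the threshold function used in the reduction. The first and main step is to establish the multiplicative identity
\[
I_{(i',j)}(g) = q_j \cdot I_{i'}(f).
\]
I would derive this by conditioning on $y^{i'}$. Under the uniform measure on $\{0,1\}^{mn}$ the blocks are independent and $(h(y^k))_{k\neq i'}$ is distributed as $\mu_p^{n-1}$; flipping the $j$-th bit of $y^{i'}$ can change $g$ only on the event $\{h(y^{i'})\neq h(y^{i'}\oplus e_j)\}$, which has uniform probability $q_j$. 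On this event the two $f$-values compared by the influence are $f(h^{-i'},0)$ and $f(h^{-i'},1)$ (in some order), so the conditional probability that they differ, averaged over $h^{-i'}\sim\mu_p^{n-1}$, is exactly $I_{i'}(f)$.

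Given the factorization,
\[
\sum_{i=1}^{mn} I_i(g)^2 = \Bigl(\sum_{j=1}^m q_j^2\Bigr)\Bigl(\sum_{i'=1}^n I_{i'}(f)^2\Bigr),
\]
so it remains to show $\sum_{j=1}^m q_j^2 \leq 12\,p^2 \lfloor\log(1/p)\rfloor$. I would combine two simple bounds on the $q_j$'s. First, the inclusion $\{y:h(y)\neq h(y\oplus e_j)\}\subset\{h(y)=1\}\cup\{h(y\oplus e_j)=1\}$, together with $\mathbb{E}[h]=p$, gives $q_j\leq 2p$ for every $j$. Second, Theorem~\ref{Thm:Lower-Bound} applied to the dictator $f(x)=x_1$ gives $\sum_{j=1}^m q_j\leq 6p\lfloor\log(1/p)\rfloor$: for this $f$ we have $\sum_i I_i(f)=1$, and the only nonzero influences of $g=h(y^1)$ lie in the first block and equal $q_1,\ldots,q_m$. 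Multiplying the two bounds, $\sum_j q_j^2 \leq 2p\sum_j q_j \leq 12\,p^2\lfloor\log(1/p)\rfloor$, which completes the proof.

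The main hurdle is the factorization $I_{(i',j)}(g)=q_j\cdot I_{i'}(f)$, which isolates the extra structure of the discrete reduction beyond what enters Theorem~\ref{Thm:Lower-Bound}; once it is in hand the remaining estimates are routine. This factorization is also, as noted in the paper, the feature that confines the argument to the discrete setting: in the continuous cube there is no canonical index $j$ for ``which position within a block was flipped,'' so the per-block decomposition of an influence of $g$ into a $q_j$ factor and an $I_{i'}(f)$ factor is unavailable.
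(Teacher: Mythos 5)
Your proof is correct, and it differs from the paper's at the second step. The exact factorization $I_{(i'-1)m+j}(g)=q_j\, I_{i'}(f)$ is valid as you argue: the event that flipping bit $j$ flips $h(y^{i'})$ depends only on $y^{i'}$, the pivotality of coordinate $i'$ for $f$ depends only on the other blocks through $(h(y^k))_{k\neq i'}\sim\mu_p^{\otimes(n-1)}$, and these are independent under the uniform measure (this requires $f$ Boolean, which is the setting of the proposition). The paper uses the same block structure but then bounds each $q_j$ individually -- $q_j\le 2p$ for $j\le\lfloor\log(1/p)\rfloor$ and $q_j\le 2^{-j+2}$ for larger $j$, the geometric decay coming from the shape of the threshold $h$ -- and sums the squares directly; in other words, it essentially reproduces the Friedgut--Kalai computation (the paper explicitly says the proof is ``essentially the same'' as that of Theorem~\ref{Thm:Lower-Bound}, included for completeness). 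You instead treat Theorem~\ref{Thm:Lower-Bound} as a black box, apply it to a dictator to get $\sum_j q_j\le 6p\lfloor\log(1/p)\rfloor$, and combine this $\ell_1$ bound with the trivial $\ell_\infty$ bound $q_j\le 2p$ via $\sum_j q_j^2\le(\max_j q_j)\sum_j q_j$. Your route avoids any per-$j$ decay estimate, gives the constant $12$ with no slack, and would apply to any reduction for which an $\ell_1$ influence bound and a trivial $\ell_\infty$ bound are available; the paper's route is self-contained (Theorem~\ref{Thm:Lower-Bound} is cited, not proved, in the paper) and exhibits the actual profile of the $q_j$'s, which is also the feature that, as you observe, ties the squared-influence bound to the discrete setting.
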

\noindent The proof is essentially the same as the proof of
Theorem~\ref{Thm:Lower-Bound} given in~\cite{Friedgut-Kalai}. For
the sake of completeness we present it here.

\begin{proof}
For $1 \leq i \leq n$, consider the influences of the coordinates
$(i-1)m+1,\ldots,im$ on $g$. It is easy to see that\footnote{We
note that this part of the argument does not hold for the Lebesgue
measure on the continuous cube. It does hold for any single fiber
(w.r.t. the expectation on that fiber), but unlike the case of
influences where the fibers can be combined using Fubini's
theorem, in the case of squares of influences the fibers cannot be
combined.}
\[
I_{(i-1)m+j}(g) \leq \left\lbrace
  \begin{array}{c l}
    2p \cdot I_i(f), & j \leq \lfloor \log(1/p) \rfloor, \\
    2^{-j+2} \cdot I_i(f), & j>\lfloor \log(1/p) \rfloor.
  \end{array}
\right.
\]
Thus,
\begin{align*}
\sum_{j=1}^m I_{(i-1)m+j}(g)^2 &\leq \left( \lfloor \log(1/p)
\rfloor \cdot 4p^2 + \sum_{j=\lfloor \log(1/p) \rfloor+1}^m
2^{-2j+4} \right) I_i(f)^2 \\ &\leq (4 p^2 \lfloor \log(1/p)
\rfloor + 8p^2) I_i(f)^2 \leq 12 p^2 \lfloor \log(1/p) \rfloor
I_i(f)^2.
\end{align*}
Summing over $i$ completes the proof.
\end{proof}

\medskip

\noindent Since for a fixed function $f:\{0,1\}^n \rightarrow
\{0,1\}$, the maps $p \mapsto \sum_i I^p_i(f)$ and $p \mapsto
\sum_i I^p_i(f)^2$ (where $I^p_i$ denotes influence w.r.t.
$\mu_p$) are uniformly continuous as function of $p$ in $[0,1]$,
we immediately get the following:
\begin{proposition}\label{Prop:Lower-Bound-Approximation}
Consider the discrete cube $\{0,1\}^n$ endowed with the product
measure $\mu_p$, $p \leq 1/2$. For any function $f:\{0,1\}^n
\rightarrow \{0,1\}$, and for any $\epsilon>0$, there exists $m
\in \mathbb{N}$ and a function $g:\{0,1\}^{mn} \rightarrow
\mathbb{R}$, such that:
\begin{itemize}
\item $|\mathbb{E}[g]-\mathbb{E}[f]|<\epsilon$,

\item $\sum_{i=1}^{mn} I_i(g) \leq 6 p \lfloor \log(1/p) \rfloor
\sum_{i=1}^n I_i(f) + \epsilon$, and

\item $\sum_{i=1}^{mn} I_i(g)^2 \leq 12 p^2 \lfloor \log(1/p)
\rfloor \sum_{i=1}^n I_i(f)^2 + \epsilon$,
\end{itemize}
where the expectation and the influences of $g$ are w.r.t. the
uniform measure and the expectation and the influences of $f$ are
w.r.t. the measure $\mu_p$.
\end{proposition}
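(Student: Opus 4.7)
The plan is to deduce the non-dyadic statement from the dyadic versions (Theorem~\ref{Thm:Lower-Bound} and Proposition~\ref{Prop:Lower-Bound}) by an approximation argument, exactly parallel to the derivation of Proposition~\ref{Prop:Non-Diadic} from Proposition~\ref{Prop:Easy2}. The key observation is that for a fixed Boolean function $f:\{0,1\}^n \to \{0,1\}$, each quantity involved is a polynomial in the parameter of the product measure: the expectation $p \mapsto \mathbb{E}_{\mu_p}[f]$ is a multilinear polynomial of degree at most $n$, each influence $p \mapsto I^p_i(f)$ is likewise a polynomial (since it is the probability of a specific event on $\{0,1\}^n$ under $\mu_p$), and therefore so are $\sum_i I^p_i(f)$ and $\sum_i I^p_i(f)^2$. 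In particular, all three maps are uniformly continuous on $[0,1/2]$.

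Given $\epsilon > 0$, I would choose a dyadic $p' = t/2^m$ satisfying $p \leq p' \leq 1/2$ and lying close enough to $p$ that each of the three polynomial quantities above, evaluated at $p'$, differs from its value at $p$ by at most $\epsilon/C$, where $C$ is a constant (depending on $6$, $12$, and crude bounds such as $\sum_i I^p_i(f) \leq n$) that will be fixed in the estimates below. Then I apply the dyadic reduction with parameter $p'$ to obtain $g:\{0,1\}^{mn} \to \{0,1\}$. By construction $\mathbb{E}_{\mu_{1/2}}[g] = \mathbb{E}_{\mu_{p'}}[f]$ exactly, so the first bullet follows from continuity of $p \mapsto \mathbb{E}_{\mu_p}[f]$.

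For the influence bounds, Theorem~\ref{Thm:Lower-Bound} and Proposition~\ref{Prop:Lower-Bound} applied with parameter $p'$ give
\[
\sum_{i=1}^{mn} I_i(g) \leq 6 p' \lfloor \log(1/p') \rfloor \sum_{i=1}^n I^{p'}_i(f), \qquad \sum_{i=1}^{mn} I_i(g)^2 \leq 12 (p')^2 \lfloor \log(1/p') \rfloor \sum_{i=1}^n I^{p'}_i(f)^2.
\]
I then want to replace $p'$ by $p$ on the right-hand sides, with the total error absorbed into $\epsilon$. The main obstacle here is that $p \mapsto \lfloor \log(1/p) \rfloor$ is a step function and is not continuous, so naive continuity does not apply directly. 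The point that rescues the argument is that this function is monotone non-increasing in $p$, and the approximation was taken from above ($p' \geq p$), so $\lfloor \log(1/p') \rfloor \leq \lfloor \log(1/p) \rfloor$. Hence $6 p' \lfloor \log(1/p') \rfloor \leq 6 p' \lfloor \log(1/p) \rfloor$ and similarly in the squared case, moving us into a setting where only the polynomial factors $p'$, $(p')^2$, $\sum_i I^{p'}_i(f)$, and $\sum_i I^{p'}_i(f)^2$ remain to be compared with their values at $p$, which is handled by uniform continuity and the choice of $p'$. This yields both remaining bullets with an additive $\epsilon$ error, completing the proof.
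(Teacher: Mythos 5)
Your proposal is correct and follows essentially the same route as the paper, which simply invokes uniform continuity of $p \mapsto \mathbb{E}_{\mu_p}[f]$, $p \mapsto \sum_i I^p_i(f)$ and $p \mapsto \sum_i I^p_i(f)^2$ and applies the dyadic results (Theorem~\ref{Thm:Lower-Bound} and Proposition~\ref{Prop:Lower-Bound}) at a nearby dyadic $p'$. Your additional care in approximating from above so that $\lfloor \log(1/p') \rfloor \leq \lfloor \log(1/p) \rfloor$ addresses a discontinuity the paper's one-line argument glosses over, and you handle it correctly.
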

\noindent Furthermore, it is clear that $g$ can be chosen such
that it will satisfy the assertions of
Propositions~\ref{Prop:Non-Diadic}
and~\ref{Prop:Lower-Bound-Approximation} simultaneously.

\begin{remark} \label{Rem:Lower1}
We note that using Proposition~\ref{Prop:Easy2}, the proof of
Proposition~\ref{Prop:Lower-Bound} can be easily adapted to show
that
\begin{equation} \label{Eq:Lower1}
\sum_{i=1}^{mn} \hat g(\{i\})^2 \leq 3 \frac{p \lfloor \log(1/p)
\rfloor}{1-p} \sum_{i=1}^n \hat f(\{i\})^2,
\end{equation}
proving the tightness of Theorem~\ref{Our-Theorem-Simple} for
$d=1$ up to a multiplicative constant.
\end{remark}

\section{Applications}
\label{Sec:Applications}

In this section we apply Theorem~\ref{Our-Theorem-Simple} to
obtain simple generalizations to the biased measure $\mu_p$ of
several results:
\begin{itemize}
\item The Bonami-Beckner hypercontractive
inequality~\cite{Bonami,Beckner},

\item A relation between the size of the vertex boundary of a
monotone\footnote{A function $f:\{0,1\}^n \rightarrow \mathbb{R}$
is monotone if for all $x,y \in [0,1]^n$, $\forall i (x_i \geq
y_i) \Longrightarrow f(x) \geq f(y)$. A subset $A \subset
\{0,1\}^n$ is monotone if its characteristic function $1_A$ is
monotone.} subset of the discrete cube and its influences,
obtained by Talagrand~\cite{Talagrand-Boundary},

\item An upper bound on the $d$-th level Fourier-Walsh
coefficients of a monotone function in terms of its influences,
obtained by Talagrand~\cite{Talagrand-Correlation} for $d=2$ and
generalized by Benjamini et al.~\cite{BKS} to any $d$, and

\item A lower bound on the correlation between monotone families,
obtained by Talagrand~\cite{Talagrand-Correlation}.
\end{itemize}

\noindent Throughout this section, we assume for the sake of
simplicity that $p$ is diadic (i.e., $p=t/2^m$), and satisfies $p
\leq 1/2$. All the proofs generalize immediately to a non-diadic
$p$ by choosing a diadic $p'$ ``close enough'' to $p$, replacing
Theorems~\ref{Our-Theorem-Simple} and~\ref{Thm:Lower-Bound} by
Propositions~\ref{Prop:Non-Diadic}
and~\ref{Prop:Lower-Bound-Approximation} (respectively), and
considering the limit as $\epsilon \rightarrow 0$. The case
$p>1/2$ also follows immediately by considering a variant of the
{\it dual function} defined as
\[
f'(x_1,x_2,\ldots,x_n)=f(1-x_1,1-x_2,\ldots,1-x_n),
\]
and noting that for any $S \subset \{1,\ldots,n\}$, we have $\hat
{f'}(S)=(-1)^{|S|} \hat f(S)$, where the Fourier-Walsh coefficients
of $f$ are w.r.t. $\mu_p$ and the coefficients of $f'$ are w.r.t.
$\mu_{1-p}$.

\medskip

\noindent For a function $f:\{0,1\}^n \rightarrow \mathbb{R}$, the
function $Red(f)=g:\{0,1\}^{mn} \rightarrow \mathbb{R}$ denotes the
function obtained from $f$ by the reduction procedure described in the
introduction. All the computations related to $f$ are w.r.t. the
measure $\mu_p$, and all the computations related to $g$ are
w.r.t. the uniform measure.

\subsection{The Bonami-Beckner Hypercontractive Inequality}
\label{sec:sub:Beckner}

One of the main tools in discrete harmonic analysis is the
Bonami-Beckner hypercontractive inequality~\cite{Bonami,Beckner}.
The inequality considers a special operator called the {\it noise
operator} that has a simple description in terms of the
Fourier-Walsh expansion.
\begin{definition}
For a function $f:\{0,1\}^n \rightarrow \mathbb{R}$ with
Fourier-Walsh expansion $f=\sum_S \hat f(S) u_S$, the application
of the noise operator with rate $\delta$ to $f$ is
\[
T_{\delta}f=\sum_S \delta^{|S|} \hat f(S) u_S.
\]
\end{definition}
\begin{theorem}[Bonami,Beckner]
Consider the discrete cube $\{0,1\}^n$ endowed with the uniform
measure. For any function $f:\{0,1\}^n \rightarrow \mathbb{R}$ and
for any $0 \leq \delta \leq 1$,
\[
||T_{\delta}f||_2 \leq ||f||_{1+\delta^2}.
\]
\end{theorem}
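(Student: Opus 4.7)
The plan follows the classical Bonami/Beckner strategy: reduce to a one-dimensional two-point inequality via tensorization, then verify that two-point inequality directly.

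\medskip

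\noindent\textbf{Tensorization.} First I would prove that if $A$ and $B$ are linear operators on functions of a single coordinate, each satisfying $\|A\|_{L^q\to L^2},\|B\|_{L^q\to L^2}\le 1$ for some $1\le q\le 2$, then $\|A\otimes B\|_{L^q\to L^2}\le 1$. Applying the two factors one coordinate at a time produces a mixed norm of the form $\|\cdot\|_{L^2_y L^q_x}$, which Minkowski's integral inequality bounds by $\|\cdot\|_{L^q_x L^2_y}$ in the favorable direction precisely because $q\le 2$. Since the noise operator $T_\delta$ on $\{0,1\}^n$ factors as the $n$-fold tensor product of the single-coordinate noise operator (immediate from $T_\delta u_S=\delta^{|S|}u_S$ and the multiplicativity of the characters $u_S$), iterating the tensorization bound reduces the theorem to the case $n=1$.

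\medskip

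\noindent\textbf{Two-point case.} For $n=1$, every $f:\{0,1\}\to\mathbb{R}$ decomposes as $f=a+b\,u_{\{1\}}$, so $T_\delta f=a+\delta b\,u_{\{1\}}$, and the inequality reduces to
\[
(a^2+\delta^2 b^2)^{q/2}\le \tfrac{1}{2}\bigl(|a+b|^q+|a-b|^q\bigr),\qquad q=1+\delta^2.
\]
By homogeneity and symmetry I would restrict to $a=1$ and $b\in[0,1]$. Setting $\psi(b)$ equal to the right-hand side minus the left-hand side, direct differentiation together with evenness of both sides yields $\psi(0)=\psi'(0)=\psi''(0)=0$, where the last identity uses the relation $\delta^2=q-1$. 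I would then show $\psi\ge 0$ on $[0,1]$ by a global analytic argument that exploits this same relation, for instance via a convex change of variables, or by reformulating the statement as the two-point logarithmic Sobolev inequality and appealing to Gross's theorem.

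\medskip

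\noindent\textbf{Main obstacle.} The tensorization step is essentially routine. The real difficulty lies in the two-point inequality, which is sharp precisely at $q=1+\delta^2$: a naive term-by-term comparison of Taylor coefficients fails, since for $q$ close to $2$ certain higher coefficients of the right-hand side are smaller in absolute value than the corresponding coefficients of the left-hand side. A genuinely global argument, sensitive to the specific algebraic relation between $\delta$ and $q$, is needed; this is the computational heart of the Bonami-Beckner theorem and the step that makes the statement non-trivial.
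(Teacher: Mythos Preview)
The paper does not prove this theorem at all: it is stated with attribution to Bonami and Beckner, cited to~\cite{Bonami,Beckner}, and then used as a black box in the proof of Proposition~\ref{Prop:Beckner-our}. There is thus no ``paper's own proof'' to compare against; your proposal is an outline of the classical proof of a result that the paper simply imports from the literature.

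As an outline of that classical proof, your structure is correct: tensorization via Minkowski's integral inequality reduces the problem to $n=1$, and the heart of the matter is the two-point inequality $(a^2+\delta^2 b^2)^{q/2}\le \tfrac12(|a+b|^q+|a-b|^q)$ with $q=1+\delta^2$. However, your proposal does not actually close that step; you describe it as the ``main obstacle'' and gesture at several possible routes (a convex change of variables, or reduction to the two-point log-Sobolev inequality via Gross's equivalence) without executing any of them. For a self-contained proof you would need to carry one of these out. Incidentally, your claim that term-by-term comparison of Taylor coefficients fails is not quite right: Bonami's original argument proceeds exactly by expanding both sides in powers of $b$ and comparing coefficients, using the explicit form of $\binom{q}{2k}$ and $\binom{q/2}{k}(q-1)^k$; the comparison does go through once one handles the signs correctly.
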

\noindent Using Theorem~\ref{Our-Theorem-Simple}, we get the
following generalization to a biased measure $\mu_p$:
\begin{proposition}\label{Prop:Beckner-our}
Consider the discrete cube $\{0,1\}^n$ endowed with the product
measure $\mu_p$, $p \leq 1/2$. For any function $f:\{0,1\}^n
\rightarrow \mathbb{R}$ and for any $0 \leq \delta \leq
\sqrt{\frac{p \lfloor \log(1/p) \rfloor}{1-p}}$,
\[
||T_{\delta}f||_2 \leq ||f||_{1+\frac{1-p}{p \lfloor \log(1/p)
\rfloor} \delta^2}.
\]
\end{proposition}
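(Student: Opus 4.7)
The plan is to deduce the biased hypercontractive inequality from the uniform one by passing through the reduction $g = Red(f)$. The central observation is that the reduction is measure-preserving in the following sense: since under $\mu_{1/2}$ on $\{0,1\}^{mn}$ each block $y^i$ is independent and $\Pr[h(y^i)=1] = t/2^m = p$, the pushforward of $\mu_{1/2}$ under $y \mapsto (h(y^1),\ldots,h(y^n))$ is exactly $\mu_p$. Consequently, for every $q \geq 1$,
\[
\|g\|_q^q = \mathbb{E}_{\mu_{1/2}}\bigl[|f(h(y^1),\ldots,h(y^n))|^q\bigr] = \mathbb{E}_{\mu_p}\bigl[|f|^q\bigr] = \|f\|_q^q.
\]

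Let $\rho = \frac{p \lfloor \log(1/p)\rfloor}{1-p}$. First I would write Parseval for both measures: $\|T_\delta f\|_2^2 = \sum_{d \geq 0} \delta^{2d} W_d(f)$ with $W_d(f) = \sum_{|S|=d} \hat f(S)^2$, and analogously $\|T_{\delta'} g\|_2^2 = \sum_{d \geq 0} (\delta')^{2d} W_d(g)$. Theorem~\ref{Our-Theorem-Simple} gives $W_d(f) \leq \rho^{-d} W_d(g)$ for $d \geq 1$, and the case $d=0$ is an equality since $\hat f(\emptyset) = \mathbb{E}_{\mu_p}[f] = \mathbb{E}_{\mu_{1/2}}[g] = \hat g(\emptyset)$. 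Setting $\delta' = \delta/\sqrt{\rho}$ and summing the weighted levels gives
\[
\|T_\delta f\|_2^2 \leq \sum_{d \geq 0} \Bigl(\frac{\delta^2}{\rho}\Bigr)^d W_d(g) = \|T_{\delta/\sqrt{\rho}}\, g\|_2^2.
\]

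The hypothesis $\delta \leq \sqrt{\rho}$ is precisely what is needed to ensure $\delta' = \delta/\sqrt{\rho} \in [0,1]$, which makes the uniform Bonami–Beckner inequality applicable to $g$. Applying it yields
\[
\|T_{\delta/\sqrt{\rho}}\, g\|_2 \leq \|g\|_{1 + \delta^2/\rho},
\]
and the measure-preserving identity above converts this to $\|f\|_{1 + \delta^2/\rho}$ with respect to $\mu_p$. Substituting $\rho$ recovers the exponent $1 + \frac{1-p}{p\lfloor \log(1/p)\rfloor}\delta^2$ exactly as stated.

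No step presents a serious obstacle once the reduction is in hand: the only points to verify with care are that the Fourier weight inequality of Theorem~\ref{Our-Theorem-Simple} chains correctly through the $T_\delta$ symbol (which is a level-wise multiplier, hence compatible with level-wise bounds), and that the pushforward identity gives equality of $L^q$ norms for the \emph{same} exponent on both sides. For non-diadic $p$, I would invoke Proposition~\ref{Prop:Non-Diadic} in place of Theorem~\ref{Our-Theorem-Simple} and take $\epsilon \to 0$, as announced in the introduction to this section.
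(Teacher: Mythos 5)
Your proposal is correct and follows essentially the same route as the paper: Parseval, the level-wise comparison from Theorem~\ref{Our-Theorem-Simple} (absorbed into $T_{\delta/\sqrt{\rho}}\,g$), the uniform Bonami--Beckner inequality, and the identity $\|g\|_q=\|f\|_q$ coming from the pushforward of $\mu_{1/2}$ to $\mu_p$. Your explicit treatment of the $d=0$ level and of where the hypothesis $\delta\leq\sqrt{\rho}$ is used are points the paper leaves implicit, but the argument is the same.
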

\begin{proof}
We have
\begin{align*}
||T_{\delta}f||_2^2 &= \sum_S \delta^{2|S|} \hat f(S)^2 =
\sum_{d=0}^n \delta^{2d} \sum_{|S|=d} \hat f(S)^2  \leq
\sum_{d=0}^n \delta^{2d} \cdot \left( \frac{1-p}{p \lfloor
\log(1/p) \rfloor} \right)^d \sum_{|S|=d} \hat g(S)^2 \\
&\leq \sum_{d=0}^{mn} \left( \delta \sqrt{\frac{1-p}{p \lfloor
\log(1/p) \rfloor}} \right)^{2d} \sum_{|S|=d} \hat g(S)^2 =
||T_{\delta \sqrt{\frac{1-p}{p \lfloor \log(1/p) \rfloor}}}
g||_2^2 \\ &\leq ||g||_{1+\frac{1-p}{p \lfloor \log(1/p) \rfloor}
\delta^2}^2 = ||f||_{1+\frac{1-p}{p \lfloor \log(1/p) \rfloor}
\delta^2}^2.
\end{align*}
The first and the third equalities follow from the Parseval
identity, the first inequality follows from
Theorem~\ref{Our-Theorem-Simple}, the third inequality follows
from the Bonami-Beckner inequality (for the uniform measure), and
the last equality follows since by the construction of $g$, we
have $||f||_q=||g||_q$ for any norm $q$.
\end{proof}

\noindent By the duality of $L^p$ norms,
Proposition~\ref{Prop:Beckner-our} implies:
\begin{proposition} \label{Prop:Beckner-our2}
Consider the discrete cube $\{0,1\}^n$ endowed with the product
measure $\mu_p$, $p \leq 1/2$. If a function $f:\{0,1\}^n
\rightarrow \mathbb{R}$ satisfies $\hat f(S)=0$ for all $|S|>d$,
then for any $q \geq 2$,
\[
||f||_q \leq \left( \frac{1-p}{p \lfloor \log(1/p) \rfloor} \cdot
(q-1) \right)^{d/2} ||f||_2.
\]
\end{proposition}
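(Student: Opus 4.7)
The plan is to dualize Proposition~\ref{Prop:Beckner-our} and then exploit the degree bound on $f$, which is the standard route from a two-point hypercontractive estimate to an $L^q$ bound for low-degree functions. Set $C := (1-p)/(p\lfloor \log(1/p) \rfloor)$; then Proposition~\ref{Prop:Beckner-our} says that $T_\delta$ is a contraction from $L^{1+C\delta^2}(\mu_p)$ to $L^2(\mu_p)$ whenever $\delta \in [0, 1/\sqrt{C}]$. Since the $u_S$ form an orthonormal basis of $L^2(\mu_p)$ and $T_\delta$ acts diagonally with real eigenvalues, $T_\delta$ is self-adjoint on $L^2(\mu_p)$, so taking adjoints gives the dual contraction $\|T_\delta h\|_{q} \le \|h\|_2$ for every $h \in L^2(\mu_p)$, where $q$ is the H\"older conjugate of $1+C\delta^2$, i.e.\ $q = 1 + 1/(C\delta^2)$, equivalently $\delta = 1/\sqrt{C(q-1)}$.

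To prove the proposition for a given $q \ge 2$, I would choose this $\delta = 1/\sqrt{C(q-1)}$ and, given an $f$ with $\hat f(S) = 0$ for $|S| > d$, introduce the pre-image $\tilde f := \sum_{|S| \le d} \delta^{-|S|} \hat f(S)\, u_S$, which is well-defined and satisfies $T_\delta \tilde f = f$. Applying the dual contraction to $\tilde f$ and then Parseval,
\[
\|f\|_q = \|T_\delta \tilde f\|_q \le \|\tilde f\|_2 = \Big(\sum_{|S|\le d} \delta^{-2|S|}\hat f(S)^2 \Big)^{1/2} \le \delta^{-d}\|f\|_2 = (C(q-1))^{d/2}\|f\|_2,
\]
which is precisely the asserted bound.

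The only step that needs checking is the admissibility of the chosen $\delta$. The hypothesis $\delta \le 1/\sqrt{C}$ of Proposition~\ref{Prop:Beckner-our} translates to $q \ge 2$, which is given, while the trivial estimate $\delta^{-|S|} \le \delta^{-d}$ used in the display requires $\delta \le 1$, i.e.\ $C(q-1) \ge 1$. This holds throughout our range because $C \ge 1$ whenever $p \le 1/2$: indeed, $p\lfloor \log(1/p)\rfloor \le p\log(1/p) \le 1/e < 1-p$ for $p \le 1/2$. I do not anticipate any real obstacle; this is a clean dualization in which Proposition~\ref{Prop:Beckner-our} does all of the analytic work, and the degree-$d$ assumption is absorbed via the trivial bound $\delta^{-|S|}\le\delta^{-d}$.
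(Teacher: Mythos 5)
Your proof is correct and is essentially the paper's own argument, which just invokes ``duality of $L^p$ norms'': self-adjointness of $T_\delta$ dualizes Proposition~\ref{Prop:Beckner-our} to $\|T_\delta h\|_q \le \|h\|_2$ at $\delta = 1/\sqrt{C(q-1)}$ with $C=\frac{1-p}{p\lfloor\log(1/p)\rfloor}$, and the degree-$d$ hypothesis then gives the factor $\delta^{-d}=(C(q-1))^{d/2}$ exactly as you describe. One small repair in your admissibility check: with the paper's base-$2$ logarithm the intermediate bound $p\log(1/p)\le 1/e$ fails for $p$ near $1/2$, but the fact you actually need, $C\ge 1$, is still true because for $p\in(2^{-k-1},2^{-k}]$ one has $p\lfloor\log(1/p)\rfloor = pk \le k2^{-k}\le 1/2 \le 1-p$.
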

\noindent We note that the problem of finding the optimal
hypercontractivity constant for a biased measure $\mu_p$, i.e.,
finding the minimal value $C_{p,q}$ for which in the assumptions
of Proposition~\ref{Prop:Beckner-our2},
\[
||f||_q \leq (C_{p,q})^{d/2} ||f||_2,
\]
was studied in various works, in several different contexts. Partial
results were obtained by Talagrand~\cite{Talagrand1},
Friedgut~\cite{Friedgut1} and Kindler~\cite{Kindler-Thesis} and
applied in the study of Boolean functions. The optimal value of
$C_{p,q}$ is attributed to Rothaus (unpublished), was stated
without proof in lecture notes of Higuchi and
Yoshida~\cite{Higuchi}, and given with proof by Diaconis and
Saloff-Coste~\cite{Diaconis} in the context of the logarithmic
Sobolev inequality. The formulation we use in
Proposition~\ref{Prop:Beckner-our2} was used by
Oleszkiewicz~\cite{Oles} in the context of the Khinchine-Kahane
inequality. For $q \geq \ln(1/p)$ (which is usually the case in
applications), the value of $C_{p,q}$ obtained in
Proposition~\ref{Prop:Beckner-our2} matches the optimal value
given in Theorem~2.1 of~\cite{Oles}, up to a multiplicative
constant.

\subsection{Relation Between the Influences and the Size of the Boundary}
\label{sec:sub:boundary}

\begin{definition}
Consider the discrete cube $\{0,1\}^n$ endowed with the product
measure $\mu_p$. For a monotone set $A \subset \{0,1\}^n$ and $1
\leq i \leq n$, let
\[
A_i=\{x \in A: x \oplus e_i \not \in A \}.\footnote{Note that this
definition is closely related to the notion of influences, as for
any monotone set $A$, we have $\mu_p(A_i) = p \cdot I_i(A)$, where
the influence is w.r.t. the measure $\mu_p$.}
\]
The vertex boundary of $A$ is
\[
\partial A = \bigcup_{i=1}^n A_i = \{x \in A: \exists (1 \leq i \leq n), x \oplus e_i \not \in A \}.
\]
\end{definition}

\noindent In~\cite{Margulis}, Margulis proved that for subsets of
the discrete cube endowed with the uniform measure, the size of
the boundary and the sum of influences cannot be small
simultaneously. For monotone subsets of the discrete cube,
Talagrand~\cite{Talagrand-Boundary} gave the following precise
form to this statement:
\begin{theorem}[Talagrand]\label{Thm:Talagrand-Boundary}
Consider the discrete cube $\{0,1\}^n$ endowed with the uniform
measure $\mu_{1/2}$. There exists $\alpha>0$ such that for any
monotone subset $A \subset \{0,1\}^n$,
\[
\mu_{1/2}(\partial A) \sum_{i=1}^n \mu_{1/2}(A_i) \geq c \varphi
\left(\mu_{1/2}(A) (1-\mu_{1/2}(A)) \right) \psi
\left(\sum_{i=1}^n \mu_{1/2}(A_i)^2 \right),
\]
where $\varphi(x)=x^2 [\log(e/x)]^{1-\alpha}$,
$\psi(x)=[\log(e/x)]^{\alpha}$, and $c>0$ is a universal constant.
\end{theorem}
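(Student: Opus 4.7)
Since the statement is Talagrand's classical inequality for the uniform measure and sits outside the reduction framework the paper has been developing, my plan is to follow the standard Fourier-analytic route that combines the Bonami--Beckner hypercontractive inequality with a level-by-level decomposition of the variance. The starting point is Parseval, $\mu_{1/2}(A)(1-\mu_{1/2}(A)) = \sum_{|S|\ge 1}\hat{1_A}(S)^2$, together with the identity $\hat{1_A}(\{i\}) = -\mu_{1/2}(A_i)$ that holds for monotone $A$ under the uniform measure; this pins down the level-one Fourier mass as exactly $\sum_i \mu_{1/2}(A_i)^2$, so the $\sum_i \mu_{1/2}(A_i)^2$ appearing inside $\psi$ on the right hand side shows up naturally.

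First I would fix an integer level $d$, to be optimized at the end, and split $\sum_{|S|\ge 1}\hat{1_A}(S)^2 = V_{\le d} + V_{>d}$. For the tail $V_{>d}$, I would apply Bonami--Beckner with noise rate $\delta\in(0,1)$ to get $V_{>d} \le \delta^{-2d}\|T_\delta 1_A\|_2^2 \le \delta^{-2d}\mu_{1/2}(A)^{2/(1+\delta^2)}$, and then tune $\delta$ so that this tail absorbs a small power of $\mu_{1/2}(A)(1-\mu_{1/2}(A))$; the $d$-dependence here is a polynomial in $\log(e/\mu_{1/2}(A))$. For the low levels $V_{\le d}$, I would use a Margulis-type level-$k$ inequality for monotone sets, which controls $\sum_{|S|=k}\hat{1_A}(S)^2$ for $2\le k\le d$ by a product involving $\mu_{1/2}(\partial A)$ and $\sum_i \mu_{1/2}(A_i)^2$; this comes from writing $\hat{1_A}(S)$ in terms of the discrete derivatives $\partial_i 1_A$ and applying Cauchy--Schwarz once per coordinate. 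Summing over $k$ then bounds $V_{\le d}$ by $C d\cdot\mu_{1/2}(\partial A)\sum_i\mu_{1/2}(A_i) + \sum_i \mu_{1/2}(A_i)^2$.

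Choosing $d \asymp \log(e/\sum_i\mu_{1/2}(A_i)^2)$ and balancing the two estimates yields the claimed inequality with $\psi(x) = [\log(e/x)]^\alpha$ and $\varphi(x) = x^2[\log(e/x)]^{1-\alpha}$. The main obstacle I anticipate is the sharp optimization that produces a specific exponent $\alpha\in(0,1)$: a crude use of $\mu_{1/2}(A_i) \le \mu_{1/2}(\partial A)$ in the level inequality, or a naive choice of $d$, collapses the trade-off into the degenerate cases $\alpha\in\{0,1\}$ and loses the fine two-sided logarithmic interpolation. Obtaining the genuine interpolation needs both a sharp level-$k$ inequality (an Oleszkiewicz-type refinement of hypercontractivity adapted to small levels is the cleanest route) and a convex optimization of the split level $d$; fitting these two ingredients together is the technical heart of Talagrand's argument and is exactly what forces the exponents in $\varphi$ and $\psi$ to be complementary.
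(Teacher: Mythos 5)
You should first note that the paper contains no proof of Theorem~\ref{Thm:Talagrand-Boundary}: it is imported verbatim from \cite{Talagrand-Boundary} and used as a black box in the proof of Proposition~\ref{Prop:Boundary}, so there is no in-paper argument to compare yours with. Judged on its own, your sketch has a genuine gap at exactly the step that carries all the difficulty. The parts you actually carry out --- Parseval, the identity $\hat{1_A}(\{i\})=-\mu_{1/2}(A_i)$ for monotone $A$, and the hypercontractive tail bound $V_{>d}\le \delta^{-2d}\mu_{1/2}(A)^{2/(1+\delta^2)}$ --- are standard. Everything rests on the asserted ``Margulis-type level-$k$ inequality'' and the resulting estimate $V_{\le d}\le C\,d\,\mu_{1/2}(\partial A)\sum_i\mu_{1/2}(A_i)+\sum_i\mu_{1/2}(A_i)^2$, which you neither state precisely nor derive (``Cauchy--Schwarz once per coordinate'' applied to the discrete derivatives produces bounds in terms of influences, not of $\mu_{1/2}(\partial A)$), and which is in fact false as stated.

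Concretely, take $A=\{x:x_1=\dots=x_k=1\}$, a subcube of codimension $k$. Then $\partial A=A$ and $A_i=A$ for $i\le k$ (and $A_i=\emptyset$ otherwise), so $\mu_{1/2}(\partial A)=2^{-k}$, $\sum_i\mu_{1/2}(A_i)=k2^{-k}$, $\sum_i\mu_{1/2}(A_i)^2=k4^{-k}$; moreover $1_A=2^{-k}\prod_{i\le k}(1-u_{\{i\}})$, so all Fourier weight sits on levels $\le k$ and the variance is $2^{-k}-4^{-k}$. Your choice $d$ of order $\log\bigl(e/\sum_i\mu_{1/2}(A_i)^2\bigr)\approx k\log 4$ exceeds $k$, so the tail vanishes and your claimed bound would force $(2^k-1-k)4^{-k}\le C\,d\,k\,4^{-k}+k4^{-k}$, i.e.\ $2^k\le C'k^2$, false for large $k$. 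The same example shows the overall strategy cannot succeed: whenever $\sum_i\mu_{1/2}(A_i)^2$ is much smaller than the variance, your scheme would yield $\mu_{1/2}(\partial A)\sum_i\mu_{1/2}(A_i)\ge c\,\mu_{1/2}(A)(1-\mu_{1/2}(A))/d$, whereas for the subcube the left side is $k4^{-k}$ and the right side is of order $2^{-k}/k$. This is precisely why Talagrand's right-hand side involves $\varphi(x)=x^2[\log(e/x)]^{1-\alpha}$, quadratic in the variance and essentially tight on subcubes. You concede at the end that the ``sharp level-$k$ inequality'' and the optimization producing the complementary exponents $\alpha$ and $1-\alpha$ are missing, but that is the entire content of Talagrand's argument (it runs through the second-level bound of Theorem~\ref{Thm:Talagrand-Bound} and his lengthy Lemma~2.2 --- the very lemma whose biased rewriting the present paper is designed to avoid), so what you have is a plan whose pivotal lemma is false in the form you need, not a proof.
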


\noindent Using Proposition~\ref{Prop:Lower-Bound}, we get the
following generalization to the measure $\mu_p$:
\begin{proposition}\label{Prop:Boundary}
Consider the discrete cube $\{0,1\}^n$ endowed with the product
measure $\mu_p$, $p \leq 1/2$. There exists $\alpha>0$ such that
for any monotone subset $A \subset \{0,1\}^n$,
\begin{equation}\label{Eq:Boundary0}
\mu_p(\partial A) \sum_{i=1}^n \mu_p(A_i) \geq \frac{c}{\lfloor
\log(1/p) \rfloor} \varphi \left(\mu_p(A) (1-\mu_p(A)) \right)
\psi \left(3 \lfloor \log(1/p) \rfloor \sum_{i=1}^n \mu_p(A_i)^2
\right),
\end{equation}
where $\varphi(x)=x^2 [\log(e/x)]^{1-\alpha}$,
$\psi(x)=[\log(e/x)]^{\alpha}$, and $c>0$ is a universal constant.
\end{proposition}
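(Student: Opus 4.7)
The plan is to apply Talagrand's uniform-measure Theorem~\ref{Thm:Talagrand-Boundary} to the set $B\subset\{0,1\}^{mn}$ whose indicator coincides with $Red(1_A)$, and then translate every quantity back to $A$. Since the threshold function $h$ is monotone in its coordinates, $B$ is monotone; and by construction $\mu_{1/2}(B)=\mu_p(A)$, so the factor $\varphi(\mu_{1/2}(B)(1-\mu_{1/2}(B)))$ produced by Talagrand's theorem is already equal to $\varphi(\mu_p(A)(1-\mu_p(A)))$.

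Next I would express the boundary quantities appearing on the two sides in terms of influences, using the footnoted identity $\mu_p(A_i)=p\cdot I_i(A)$ for monotone $A$ (and similarly $\mu_{1/2}(B_j)=\tfrac12 I_j(B)$). Then Theorem~\ref{Thm:Lower-Bound} and Proposition~\ref{Prop:Lower-Bound} yield, respectively,
\[
\sum_{j=1}^{mn}\mu_{1/2}(B_j)\leq 3\lfloor\log(1/p)\rfloor\sum_{i=1}^{n}\mu_p(A_i),\qquad
\sum_{j=1}^{mn}\mu_{1/2}(B_j)^2\leq 3\lfloor\log(1/p)\rfloor\sum_{i=1}^{n}\mu_p(A_i)^2.
\]
Because $\psi(x)=[\log(e/x)]^{\alpha}$ is monotone decreasing, the second estimate lets me replace $\psi(\sum_j\mu_{1/2}(B_j)^2)$ by the smaller quantity $\psi(3\lfloor\log(1/p)\rfloor\sum_i\mu_p(A_i)^2)$ while preserving the $\geq$ direction.

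The geometric step that I consider the main obstacle is comparing the two vertex boundaries. I would argue that $\mu_{1/2}(\partial B)\leq\mu_p(\partial A)$ as follows: if $y=(y^1,\dots,y^n)\in\partial B$, then some coordinate flip $y\xor e_j$ leaves $B$. Writing $j=(i-1)m+l$, such a flip can leave $B$ only if it changes $h(y^i)$ (otherwise $(h(y^1),\dots,h(y^n))$ is unchanged and so is membership in $A$); hence $(h(y^1),\dots,h(y^n))\xor e_i\notin A$, so $(h(y^1),\dots,h(y^n))\in\partial A$. Therefore $\partial B\subseteq\{y:(h(y^1),\dots,h(y^n))\in\partial A\}$, and taking $\mu_{1/2}$-measures gives $\mu_{1/2}(\partial B)\leq\mu_p(\partial A)$ by the defining property of the reduction.

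Finally I would apply Theorem~\ref{Thm:Talagrand-Boundary} to $B$, substitute the above four inequalities on the right-hand side, and use $\mu_p(\partial A)\geq\mu_{1/2}(\partial B)$ on the left. Pulling the constant $3\lfloor\log(1/p)\rfloor$ in the bound on $\sum_j\mu_{1/2}(B_j)$ across to the right-hand denominator yields~\eqref{Eq:Boundary0} with the universal constant $c/3$ in place of $c$, completing the proof.
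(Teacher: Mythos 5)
Your proposal is correct and follows essentially the same route as the paper: apply Talagrand's uniform-measure theorem to the monotone set $B$ defined by $Red(1_A)$, bound $\mu_{1/2}(\partial B)\leq\mu_p(\partial A)$ via the componentwise map $y\mapsto(h(y^1),\ldots,h(y^n))$, translate influences through Theorem~\ref{Thm:Lower-Bound} and Proposition~\ref{Prop:Lower-Bound}, and use the monotonicity of $\psi$. Your boundary-comparison argument is, if anything, stated a bit more cleanly than the paper's, but it is the same idea.
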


\begin{proof}
Denote $f=1_A$, construct $g:\{0,1\}^{mn} \rightarrow \{0,1\}$ as
described in the introduction, and let $B=\{y \in
\{0,1\}^{mn}:g(y)=1\}$. It is easy to see that $\mu_{1/2}(\partial
B) \leq \mu_p(\partial A)$. Indeed, to any $y=(y^1,y^2,\ldots,y^n)
\in \{0,1\}^{mn}$ we can attach $H(y) \in \{0,1\}^n$ by setting
$H(y)_i=1$ if $y^i \neq (0,0,\ldots,0)$ and $H(y)_i=0$ otherwise,
and then it is clear by the construction of $g$ that $(y \in
\partial(B)) \Rightarrow (H(y) \in \partial(A))$. Since for any $x
\in A$ we have $\mu_{1/2}(\{y \in B: H(y)=x\}) =\mu_p(x)$, it
follows that $\mu_{1/2}(\partial B) \leq \mu_p(\partial A)$.

\medskip

\noindent Furthermore, it follows from
Theorem~\ref{Thm:Lower-Bound} that
\[
\sum_{j=1}^{mn} \mu_{1/2}(B_j) \leq 3 \lfloor \log(1/p) \rfloor
\sum_{i=1}^n \mu_p(A_i).
\]
Therefore,
\begin{equation}\label{Eq:Boundary1}
\mu_{1/2}(\partial B) \sum_{j=1}^{mn} \mu_{1/2}(B_j) \leq 3
\lfloor \log(1/p) \rfloor \mu_p(\partial A) \sum_{i=1}^n
\mu_p(A_i).
\end{equation}
On the other hand, it follows from
Proposition~\ref{Prop:Lower-Bound} that
\[
\sum_{j=1}^{mn} \mu_{1/2}(B_j)^2 \leq 3 \lfloor \log(1/p) \rfloor
\sum_{i=1}^{n} \mu_{p}(A_i)^2.
\]
Since by the construction of $B$, we have $\mu_{1/2}(B)=\mu_p(A)$,
and since the function $\psi(x)=\log(e/x)^{\alpha}$ is monotone
decreasing, it follows that
\begin{equation}\label{Eq:Boundary2}
\varphi \left(\mu_{1/2}(B) (1-\mu_{1/2}(B)) \right) \psi
\left(\sum_{j=1}^{mn} \mu_{1/2}(B_j)^2 \right) \geq \varphi
\left(\mu_p(A) (1-\mu_p(A)) \right) \psi \left( 3 \lfloor
\log(1/p) \rfloor \sum_{i=1}^n \mu_p(A_i)^2 \right).
\end{equation}
Combining Equations~(\ref{Eq:Boundary1}) and~(\ref{Eq:Boundary2})
with Theorem~\ref{Thm:Talagrand-Boundary}, we get
\begin{align*}
\mu_p(\partial A) \sum_{i=1}^n \mu_p(A_i) &\geq \frac{1}{3 \lfloor
\log(1/p) \rfloor} \mu_{1/2}(\partial B) \sum_{j=1}^{mn}
\mu_{1/2}(B_j)
\\
&\geq \frac{c}{3 \lfloor \log(1/p) \rfloor} \varphi
\left(\mu_{1/2}(B) (1-\mu_{1/2}(B)) \right) \psi
\left(\sum_{j=1}^{mn} \mu_{1/2}(B_j)^2 \right)
\\
&\geq \frac{c'}{ \lfloor \log(1/p) \rfloor} \varphi \left(\mu_p(A)
(1-\mu_p(A)) \right) \psi \left( 3 \lfloor \log(1/p) \rfloor
\sum_{i=1}^n \mu_p(A_i)^2 \right),
\end{align*}
as asserted.
\end{proof}

\medskip

\noindent We show now that Proposition~\ref{Prop:Boundary} is
tight by considering a balanced threshold function on the biased
discrete cube. Consider the measure $\mu_p$ on $\{0,1\}^n$, and
let
\[
A=\{x \in \{0,1\}^n : \sum_{i=1}^n x_i > \lfloor np \rfloor\}.
\]
It is well-known that $\mu_p(A) = \Theta(1)$. We have
\[
\partial A = \{x \in \{0,1\}^n : \sum_{i=1}^n x_i = \lfloor np \rfloor +1\},
\]
and hence it can be shown using Stirling's formula that
\[
\mu_p(\partial A) \approx \frac{1}{\sqrt{2 \pi n p(1-p)}}.
\]
Similarly,
\[
A_i = \{x \in \{0,1\}^n : \left(\sum_{i=1}^n x_i = \lfloor np
\rfloor +1 \right) \wedge (x_i=1) \},
\]
and thus,
\[
\mu_p(A_i) \approx \sqrt{\frac{p}{2 \pi n (1-p)}}.
\]
Therefore,
\[
\mbox{ L.h.s. of~(\ref{Eq:Boundary0}) } = \mu_p(\partial A)
\sum_{i=1}^n \mu_p(A_i) \approx n \cdot \frac{1}{\sqrt{2 \pi n
p(1-p)}} \cdot \sqrt{\frac{p}{2 \pi n (1-p)}} = \Theta(1).
\]
In the right hand side, we have $\sum_{i=1}^n \mu_p(A_i)^2 \approx
\frac{p}{2 \pi (1-p)}$, and thus,
\[
\psi \left( 3 \lfloor \log(1/p) \rfloor \sum_{i=1}^n \mu_p(A_i)^2
\right) \approx c(\log(1/p))^{\alpha}.
\]
Since $\varphi \left(\mu_p(A)(1-\mu_p(A)) \right) = \Theta(1)$, it
follows that
\[
\mbox{ R.h.s. of~(\ref{Eq:Boundary0}) } \approx \frac{c}{\lfloor
\log(1/p) \rfloor} \cdot (\log(1/p))^{\alpha} = c'
(\log(1/p))^{\alpha-1}.
\]
Therefore, if Theorem~\ref{Thm:Talagrand-Boundary} holds for
$\alpha=1$ as conjectured by Talagrand~\cite{Talagrand-Boundary},
then the assertion of Proposition~\ref{Prop:Boundary} is tight, up
to constant factors.\footnote{We note that we are not aware of a 
non-trivial example demonstrating the tightness of Talagrand's 
conjectured assertion for the uniform measure.}

\medskip

\noindent We note that in~\cite{Talagrand-Boundary}, Talagrand
remarked that Theorem~\ref{Thm:Talagrand-Boundary} can be
generalized to a biased measure, but he refrained from doing so
since it would have required ``to write in the case $p \neq 1/2$
the proof of the lengthy Lemma 2.2 below''. Our proof shows that
the statement of the theorem can be generalized to the biased
measure ``automatically'', avoiding the need to repeat the proof.

\subsection{Upper Bound on the $d$-th Level Fourier-Walsh
Coefficients} \label{sec:sub:Talagrand-Fourier}

In~\cite{Talagrand-Correlation}, Talagrand obtained the following
upper bound on the second level Fourier-Walsh coefficients of a
monotone Boolean function in terms of its
influences.\footnote{Actually, Talagrand proved a decoupled
version of the theorem, bounding $\sum_{|S|=2} \hat f(S) \hat
g(S)$ for monotone functions $f,g$. Our generalization holds for
the decoupled version as well.}
\begin{theorem}[Talagrand]\label{Thm:Talagrand-Bound}
Consider the discrete cube $\{0,1\}^n$ endowed with the uniform
measure. Let $f:\{0,1\}^n \rightarrow \{0,1\}$ be a monotone
function. Then
\[
\sum_{|S|=2} \hat f(S)^2 \leq c \sum_{i=1}^n I_i(f)^2 \cdot \log
\left(\frac{e}{\sum_{i=1}^n I_i(f)^2} \right),
\]
where $c$ is a universal constant.
\end{theorem}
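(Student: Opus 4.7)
My plan is to reduce the bound on the second-level Fourier energy to a collection of first-level energies of discrete derivatives, and then apply the Bonami-Beckner hypercontractive inequality in dual form. The delicate aggregation step will be handled by a choice of exponents tied to the global quantity $J := \sum_i I_i(f)^2$ rather than to individual influences.

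First, recall that for the discrete derivative $D_i f(x) := f(x^{i\to 1}) - f(x^{i\to 0})$, one has $\widehat{D_i f}(T) = \hat f(T \cup \{i\})$ whenever $i \notin T$. Consequently, for each $i$ the first-level energy of $D_i f$ satisfies $\|P^{=1}(D_i f)\|_2^2 = \sum_{j \neq i} \hat f(\{i,j\})^2$, and summing over $i$ yields
\[
2\sum_{|S|=2} \hat f(S)^2 \;=\; \sum_{i=1}^n \|P^{=1}(D_i f)\|_2^2.
\]
Monotonicity of $f$ forces $D_i f$ to be $\{0,1\}$-valued, so $\|D_i f\|_p = I_i^{1/p}$ for every $p \geq 1$. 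This is the only place where monotonicity enters.

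Second, apply the hypercontractive inequality in dual form: for conjugate exponents $1 < p < 2 < q$ with $1/p + 1/q = 1$, combining Holder with Bonami-Beckner on the level-1 function $P^{=1}(D_i f)$ gives
\[
\|P^{=1}(D_i f)\|_2^2 \;=\; \langle D_i f,\, P^{=1}(D_i f)\rangle \;\leq\; \|D_i f\|_p\,\|P^{=1}(D_i f)\|_q \;\leq\; I_i^{1/p}(q-1)^{1/2}\,\|P^{=1}(D_i f)\|_2,
\]
which rearranges to $\|P^{=1}(D_i f)\|_2^2 \leq (q-1)\,I_i^{2/p}$.

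The crux, and the main obstacle, is choosing the exponent to produce the correct aggregated logarithmic factor. A per-coordinate optimization $q-1 \asymp \log(1/I_i)$ would yield $\|P^{=1}(D_i f)\|_2^2 \lesssim I_i^2 \log(1/I_i)$, whose sum can strictly exceed $J\log(e/J)$ when the influences are spread over many coordinates. Instead, one selects $q-1 \asymp \log(e/J)$ \emph{uniformly in} $i$ and exploits the trivial bound $I_i^2 \leq J$ (which holds for every $i$ since $J = \sum_j I_j^2$) to control the correction factor $I_i^{-2/q} = \exp\bigl(\tfrac{2\log(1/I_i)}{q}\bigr)$. For indices with $I_i$ not much smaller than $\sqrt{J}$, this factor is bounded by an absolute constant, so $I_i^{2/p} \leq C I_i^2$; for the remaining indices with $I_i \ll \sqrt{J}$, one must switch to an auxiliary estimate (for instance, the trivial Parseval bound $\|P^{=1}(D_i f)\|_2^2 \leq \|D_i f\|_2^2 = I_i$ combined with the fact that these indices contribute a controllable share of $J$). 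Carrying through this two-regime analysis converts the per-coordinate hypercontractive estimate into the desired global bound $\sum_i \|P^{=1}(D_i f)\|_2^2 \leq c\, J\log(e/J)$, from which the theorem follows upon dividing by two.
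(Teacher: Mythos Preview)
The paper does not supply its own proof of this theorem; it is quoted from Talagrand~\cite{Talagrand-Correlation} and used as a black box. So there is nothing in the paper to compare against, and your attempt must be judged on its own merits.

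The reduction to discrete derivatives and the hypercontractive estimate $\|P^{=1}(D_if)\|_2^2\le(q-1)I_i^{2/p}$ are correct and standard. The gap is in the aggregation. With the global choice $q-1=\log(e/J)$ the sum $\sum_i(q-1)I_i^{2-2/q}$ is \emph{not} controlled by $J\log(e/J)$: take $k$ coordinates each with $I_i=\sqrt{J/k}$ (realizable, up to constants, by a tribes-type function). Then
\[
(q-1)\sum_i I_i^{2-2/q}=(q-1)\,k^{1/q}\,J^{(q-1)/q}\;\asymp\;\log(e/J)\cdot J\cdot k^{1/q},
\]
which tends to infinity as $k\to\infty$ with $J$ fixed. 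Your proposed Parseval fallback for the ``small'' indices gives $\sum_{\text{small}} I_i=\sqrt{Jk}$ in the same example, which also diverges; the phrase ``these indices contribute a controllable share of $J$'' refers to $\sum_{\text{small}}I_i^2$, but the Parseval bound produces $\sum_{\text{small}}I_i$, and the two are unrelated in general. Thus neither regime is under control, and the sketched two-regime analysis does not close.

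You are right that the per-coordinate optimization $q_i-1\asymp\log(1/I_i)$ yields only $\sum_iI_i^2\log(e/I_i)$, which can genuinely exceed $J\log(e/J)$; but the global-$q$ alternative you propose has the same defect. Talagrand's actual argument organizes the hypercontractive input differently (via a decoupled bilinear form and a more careful bookkeeping of which pairs $\{i,j\}$ are being summed), and it is this reorganization---not merely a smarter choice of threshold---that produces the global logarithm. A complete proof along your lines would need a new idea at precisely this step.
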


\noindent Talagrand used Theorem~\ref{Thm:Talagrand-Bound} as a
central lemma in proving Theorem~\ref{Thm:Talagrand-Boundary} and
Theorem~\ref{Thm:Talagrand-Correlation} (see below).
In~\cite{BKS}, Benjamini et al. generalized Talagrand's theorem to
bound the $d$-th level coefficients:
\begin{theorem}[Benjamini, Kalai, and Schramm] \label{Thm:BKS-lemma}
Consider the discrete cube
$\{0,1\}^n$ endowed with the uniform measure. Let $f:\{0,1\}^n
\rightarrow \{0,1\}$ be a monotone function. Then for any $d \geq
2$,
\[
\sum_{|S|=d} \hat f(S)^2 \leq C_d \sum_{i=1}^n I_i(f)^2 \cdot
\log^{d-1} \left(\frac{e}{\sum_{i=1}^n I_i(f)^2} \right),
\]
where $C_d$ is a constant depending only on $d$.
\end{theorem}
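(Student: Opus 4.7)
The plan is to reduce the estimation of $\sum_{|S|=d}\hat f(S)^2$ to level-$(d-1)$ estimates for the discrete derivatives of $f$ and then apply Bonami-Beckner hypercontractivity to each derivative, following the scheme that Talagrand used for the $d=2$ case and extending it by the same optimization to all $d \geq 2$. For each coordinate $i$, define $\partial_i f(x) = f(x^{i\to 1}) - f(x^{i\to 0})$. Since $f$ is monotone Boolean, $\partial_i f$ takes values in $\{0,1\}$ with $\mathbb{E}[\partial_i f] = I_i(f)$. A direct Fourier computation gives $\widehat{\partial_i f}(T) = \pm\,\hat f(T\cup\{i\})$ (up to a convention-dependent factor) for $i\notin T$, and summing over $i$ yields the identity
\[
\sum_{|S|=d}\hat f(S)^2 \;=\; \frac{1}{d}\sum_{i=1}^n\sum_{\substack{|T|=d-1\\ i\notin T}}\widehat{\partial_i f}(T)^2.
\]

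Next, I would apply the Bonami-Beckner inequality to the $\{0,1\}$-valued function $\partial_i f$: for any $\rho\in(0,1)$,
\[
\rho^{2(d-1)}\sum_{|T|=d-1}\widehat{\partial_i f}(T)^2 \;\leq\; \|T_\rho(\partial_i f)\|_2^2 \;\leq\; \|\partial_i f\|_{1+\rho^2}^2 \;=\; I_i(f)^{2/(1+\rho^2)}.
\]
Rather than optimizing $\rho$ per coordinate (which would give a per-index bound $C_d\, I_i^2 \log^{d-1}(e/I_i)$), I would choose $\rho$ globally as a function of $V := \sum_j I_j(f)^2$, namely $\rho^2 \sim (d-1)/\log(e/V)$, so that $\rho^{-2(d-1)} \leq C_d\log^{d-1}(e/V)$. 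Substituting back and summing over $i$ gives
\[
\sum_{|S|=d}\hat f(S)^2 \;\leq\; \frac{C_d}{d}\,\log^{d-1}\!\left(\frac{e}{V}\right)\sum_{i=1}^n I_i(f)^{2/(1+\rho^2)}.
\]

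The main obstacle is now to prove $\sum_i I_i(f)^{2/(1+\rho^2)} \leq C'_d \, V$, closing the argument. This is delicate: $I_i^{2/(1+\rho^2)}$ exceeds $I_i^2$ whenever $I_i < 1$, and a naive bound fails (for example the per-coordinate estimate $\sum_i I_i^2 \log^{d-1}(e/I_i)$ obtained from per-coordinate optimization can be strictly larger than $V \log^{d-1}(e/V)$ when many coordinates share a moderate influence). The resolution is to partition $[n]$ into a ``heavy'' set $H = \{i : I_i(f) \geq V/e\}$, on which direct substitution of the choice of $\rho$ gives $I_i^{2/(1+\rho^2)} \leq e^{O(d)}\,I_i^2$ and hence $\sum_{i \in H} I_i^{2/(1+\rho^2)} \leq e^{O(d)} V$, and a ``light'' set whose contribution is controlled using the monotonicity of $f$ (in particular the bounds $I_i(f) \leq 1$ and the Poincar\'e-type estimate $\sum_i I_i(f) \geq 2\mathrm{Var}(f)$) together with a Hölder-type inequality that interpolates between the $\ell^1$ and $\ell^2$ norms of the influence vector. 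Once this aggregation step is established, plugging into the preceding display completes the proof of the Benjamini-Kalai-Schramm bound.
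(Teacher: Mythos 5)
First, note that the paper does not prove this theorem at all: it is imported verbatim from Benjamini--Kalai--Schramm \cite{BKS} and used as a black box in Section~\ref{sec:sub:Talagrand-Fourier}, so there is no proof in the paper to compare against and your argument must stand on its own. It does not: the step you yourself flag as the ``main obstacle'' is not merely delicate, it is false. The aggregation inequality you need, $\sum_i I_i(f)^{2/(1+\rho^2)} \le C'_d \sum_i I_i(f)^2$ with the global choice $\rho^2 \asymp (d-1)/\log(e/V)$, fails for the majority function on $n$ bits: there $I_i \approx \sqrt{2/(\pi n)}$ for every $i$, so $V=\sum_i I_i^2 \approx 2/\pi$ is a constant, $\rho$ is bounded below by a constant, and $\sum_i I_i^{2/(1+\rho^2)} = \sum_i I_i^2\, I_i^{-2\rho^2/(1+\rho^2)} \approx V\, n^{\rho^2/(1+\rho^2)} \to \infty$, while the right-hand side of the theorem stays bounded. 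Your proposed repair cannot close this: for majority every coordinate lies in your ``light'' set (each $I_i \ll V/e$), the H\"older interpolation $\sum_i I_i^{2-\epsilon} \le \big(\sum_i I_i\big)^{\epsilon}\big(\sum_i I_i^2\big)^{1-\epsilon}$ is useless because $\sum_i I_i \approx \sqrt{2n/\pi}$ is itself unbounded, and the Poincar\'e bound $\sum_i I_i \ge 2\mathrm{Var}(f)$ points in the wrong direction (it is a lower bound).

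The failure is structural rather than a matter of tuning $\rho$: any argument that bounds $\sum_{|T|=d-1}\widehat{\partial_i f}(T)^2$ by a function of $I_i$ alone and then sums over $i$ is doomed. For majority the true total $\sum_i\sum_{|T|=d-1}\widehat{\partial_i f}(T)^2$ equals (up to the normalization you left unspecified) $4d\sum_{|S|=d}\hat f(S)^2 = O_d(1)$, whereas the sum of the per-coordinate hypercontractive bounds $\rho^{-2(d-1)}I_i^{2/(1+\rho^2)}$ diverges polynomially in $n$ for any fixed $\rho$, and even the best global choice $\rho^2 \asymp 1/\log n$ only yields a bound of order $V\log^{d-1} n$, i.e.\ $V\log^{d-1}(e/\min_i I_i)$, which is genuinely weaker than the claimed $V\log^{d-1}(e/V)$ (the same defect afflicts the per-coordinate-optimized bound $\sum_i C_d I_i^2\log^{d-1}(e/I_i)$, as you correctly observed). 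So the missing ingredient is precisely the heart of the theorem: the actual proofs (Talagrand for $d=2$, Benjamini--Kalai--Schramm for general $d$) must exploit monotonicity and cross-coordinate structure beyond the fact that $\partial_i f$ is $\{0,1\}$-valued with mean $I_i$, which is exactly why the present paper treats this statement as a nontrivial imported result (and why its author advertises the reduction as a way to avoid redoing such proofs in the biased setting).
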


\noindent Theorem~\ref{Thm:BKS-lemma} was used by Benjamini et al.
to show a qualitative relation between noise sensitivity of
Boolean functions and their influences, and has applications in
the study of percolation.

\medskip

\noindent Using Theorem~\ref{Our-Theorem-Simple}, we obtain the
following generalization of Theorem~\ref{Thm:BKS-lemma} to the
biased measure:
\begin{proposition}\label{Prop:BKS-lemma}
Consider the discrete cube $\{0,1\}^n$ endowed with the measure
$\mu_p$, $p \leq 1/2$. Let $f:\{0,1\}^n \rightarrow \{0,1\}$ be a
monotone function. Then for any $d \geq 2$,
\[
\sum_{|S|=d} \hat f(S)^2 \leq C_d \cdot \left( \frac{1-p}{p
\lfloor \log(1/p) \rfloor} \right)^{d-1} \cdot p(1-p) \cdot
\sum_{i=1}^n I_i(f)^2 \cdot \log^{d-1} \left(\frac{e}{p^2 \lfloor
\log(1/p) \rfloor \sum_{i=1}^n I_i(f)^2} \right),
\]
where $C_d$ is a constant depending only on $d$.
\end{proposition}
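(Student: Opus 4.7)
The plan is to push Theorem~\ref{Thm:BKS-lemma} through the reduction. Set $g = Red(f)$ on $\{0,1\}^{mn}$. The thresholding map $h:\{0,1\}^m \to \{0,1\}$, sending $y^i$ to $1$ iff $Bin(y^i)\ge 2^m - t$, is monotone (increasing any coordinate can only increase $Bin(y^i)$), so the composition $g(y) = f(h(y^1),\ldots,h(y^n))$ inherits monotonicity from $f$. Hence Theorem~\ref{Thm:BKS-lemma} applies to $g$ with respect to the uniform measure on $\{0,1\}^{mn}$:
\[
\sum_{|S|=d}\hat g(S)^2 \le C_d \sum_{j=1}^{mn} I_j(g)^2 \cdot \log^{d-1}\!\left(\frac{e}{\sum_{j=1}^{mn} I_j(g)^2}\right).
\]

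I would then translate both sides back to $f$. Theorem~\ref{Our-Theorem-Simple} lower-bounds the left hand side by $\bigl(p\lfloor\log(1/p)\rfloor/(1-p)\bigr)^d \sum_{|S|=d}\hat f(S)^2$, while Proposition~\ref{Prop:Lower-Bound} gives $\sum_j I_j(g)^2 \le 12 Y$, where $Y := p^2\lfloor\log(1/p)\rfloor \sum_i I_i(f)^2$. Substituting both bounds and solving for $\sum_{|S|=d}\hat f(S)^2$ yields the prefactor
\[
\frac{p^2\lfloor\log(1/p)\rfloor\,(1-p)^d}{p^d\lfloor\log(1/p)\rfloor^d} = p(1-p)\left(\frac{1-p}{p\lfloor\log(1/p)\rfloor}\right)^{d-1},
\]
which is exactly the prefactor claimed in the proposition.

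The only step requiring real care is the substitution inside the logarithm. Since $\Phi(x) := x\log^{d-1}(e/x)$ is monotone increasing only on $(0,e^{2-d}]$, the replacement $\sum_j I_j(g)^2 \mapsto 12Y$ inside $\Phi$ is permissible only when $12Y \le e^{2-d}$. In the complementary range, $Y$ is bounded below by a constant depending only on $d$, so the right hand side of the proposition is itself at least a constant, while $\sum_{|S|=d}\hat f(S)^2 \le \mathbb{E}_{\mu_p}[f^2]\le 1$; the conclusion then holds trivially after inflating $C_d$. In the good range, monotonicity of $\Phi$ gives $\Phi(\sum_j I_j(g)^2) \le \Phi(12Y) = 12Y \log^{d-1}(e/(12Y)) \le 12Y \log^{d-1}(e/Y)$, and the stray factor of $12$ is absorbed into $C_d$. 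This logarithmic bookkeeping is the main, albeit modest, obstacle; everything else is a mechanical combination of Theorems~\ref{Our-Theorem-Simple}, \ref{Thm:BKS-lemma} and Proposition~\ref{Prop:Lower-Bound}.
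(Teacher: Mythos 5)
Your argument is correct and is essentially the paper's proof: the same three-step chain of Theorem~\ref{Our-Theorem-Simple} (to pass from $f$ to $g=Red(f)$), Theorem~\ref{Thm:BKS-lemma} applied to the monotone function $g$, and Proposition~\ref{Prop:Lower-Bound} (to return to the influences of $f$), with the same prefactor arithmetic. The only divergence is how the non-monotone factor $\Phi(x)=x\log^{d-1}(e/x)$ is handled. You split according to whether $12Y\le e^{2-d}$ and dispose of the complementary range trivially; note that for that trivial range you implicitly need an \emph{upper} bound on $Y$ as well as the lower bound, since if $Y$ were near $e$ the factor $\log^{d-1}(e/Y)$ on the right-hand side would not be bounded below (and could even be negative for even $d$). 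This is easy but should be said: for monotone Boolean $f$ one has $p(1-p)\sum_i I_i(f)^2=\sum_i\hat f(\{i\})^2\le \Var_{\mu_p}(f)\le 1/4$ and $p\lfloor\log(1/p)\rfloor\le 1-p$ for $p\le 1/2$, so $Y\le 1/4$ and $\log(e/Y)$ is bounded below by a positive constant. The paper's last step, cited as ``a combination of Theorem~\ref{Our-Theorem-Simple} and Proposition~\ref{Prop:Lower-Bound},'' avoids the case analysis by using a \emph{two-sided} comparison: since $f$ and $g$ are monotone, level-one Fourier weights are proportional to squared influences, so Theorem~\ref{Our-Theorem-Simple} with $d=1$ gives $\sum_j I_j(g)^2\ge 4Y$ while Proposition~\ref{Prop:Lower-Bound} gives $\sum_j I_j(g)^2\le 12Y$, and together with $\sum_j I_j(g)^2\le 4\Var_{\mu_{1/2}}(g)\le 1<e$ one may replace the argument of the logarithm directly. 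Both routes work; yours just needs the one-line bound $Y\le 1/4$ made explicit.
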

\begin{proof}
We have
\begin{align*}
\sum_{|S|=d} \hat f(S)^2 &\leq \left( \frac{1-p}{p \lfloor
\log(1/p) \rfloor} \right)^d \sum_{|S|=d} \hat g(S)^2 \\
&\leq \left( \frac{1-p}{p \lfloor \log(1/p) \rfloor} \right)^d
\cdot C_d \cdot \sum_{i=1}^{mn} I_i(g)^2 \cdot \log^{d-1}
\left(\frac{e}{\sum_{i=1}^{mn} I_i(g)^2} \right) \\
&\leq \left( \frac{1-p}{p \lfloor \log(1/p) \rfloor} \right)^{d-1}
\cdot C'_d \cdot p(1-p) \cdot \sum_{i=1}^n I_i(f)^2 \cdot
\log^{d-1} \left(\frac{e}{p^2 \lfloor \log(1/p) \rfloor} \cdot
\sum_{i=1}^n I_i(f)^2 \right).
\end{align*}
The first inequality follows from
Theorem~\ref{Our-Theorem-Simple}, the second follows from
Theorem~\ref{Thm:BKS-lemma}, and the third one follows from
combination of Theorem~\ref{Our-Theorem-Simple} and
Proposition~\ref{Prop:Lower-Bound}.
\end{proof}

\medskip

\noindent We note that in a recent paper~\cite{Keller-Kindler},
Theorem~\ref{Thm:BKS-lemma} was generalized to non-monotone
functions, and the dependence of $C_d$ on $d$ was determined. Our
generalization applies to these results as well, and the resulting
upper bound for the measure $\mu_p$ is slightly weaker than the
bound obtained in~\cite{Keller-Kindler} by adaptation of the
entire proof to the biased measure.

\subsection{Lower Bound on the Correlation Between Monotone
Families} \label{sec:sub:correlation}

In~\cite{Talagrand-Correlation}, Talagrand obtained a lower bound
on the correlation between monotone families, improving over the
classical Harris-Kleitman correlation inequality:
\begin{theorem}[Talagrand] \label{Thm:Talagrand-Correlation}
Consider the discrete cube $\{0,1\}^n$ endowed with the uniform
measure $\mu$. For any pair of monotone families $A,B \subset
\{0,1\}^n$,
\[
\mu(A \cap B)-\mu(A)\mu(B) \geq c \varphi \left(\sum_{i=1}^n
I_i(A) I_i(B) \right),
\]
where $\varphi(x)=x/\log(e/x)$, and $c$ is a universal constant.
\end{theorem}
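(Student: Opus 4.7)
The plan is to Fourier-expand the covariance and extract the level-$1$ contribution explicitly. By Parseval,
\[
\mu(A\cap B)-\mu(A)\mu(B) = \sum_{S\neq\emptyset}\hat{f}(S)\hat{g}(S), \qquad f=1_A,\ g=1_B,
\]
and a short computation shows that for a monotone $\{0,1\}$-valued function one has $\hat{f}(\{i\})=-I_i(A)/2$, so the singleton part contributes exactly $M/4$, where $M=\sum_i I_i(A)I_i(B)$ is the quantity inside $\varphi$. The task is then to show that the $|S|\ge 2$ tail cannot destroy this leading term by more than a factor of $\log(e/M)$.

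The main tool is a semigroup representation of the covariance. With $\partial_i h(x)=h(x^{i\to 1})-h(x^{i\to 0})$ and $T_\rho$ the noise operator, one has (up to a constant) the identity
\[
\mathrm{Cov}(f,g)=\frac{1}{4}\int_0^{1}\sum_{i=1}^n\mathbb{E}\bigl[\partial_i f\cdot T_{\rho}\,\partial_i g\bigr]\,d\rho,
\]
which comes from expanding $\partial_i f=-2\sum_{S\ni i}\hat f(S)u_{S\setminus\{i\}}$, doing the same for $g$, and integrating $\rho^{|S|-1}$ so that the $\rho$-integral produces the factor $1/|S|$ that cancels the $|S|$ coming from the two derivatives. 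Monotonicity forces $\partial_i f,\partial_i g\in\{0,1\}$ with $\mathbb{E}[\partial_i f]=I_i(A)$ and $\mathbb{E}[\partial_i g]=I_i(B)$, so every integrand is nonnegative and no cancellation is possible. To lower bound the integral I would split the $\rho$-interval at a threshold $\rho^\ast$ chosen as a function of $M$: for $\rho$ near $0$, use $T_\rho\partial_i g\to I_i(B)$ and Cauchy-Schwarz in $i$ against $\mathbb{E}[(\partial_i f)^2]=I_i(A)$; for $\rho$ near $1$, invoke Bonami-Beckner $\|T_\rho h\|_2\le\|h\|_{1+\rho^2}$ on $h=\partial_i g$, exploiting that $h$ is a $\{0,1\}$-indicator of mass $I_i(B)$ so $\|h\|_q=I_i(B)^{1/q}$, and then pair with $\partial_i f$ by Cauchy-Schwarz.

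The principal obstacle is producing a bound that involves $M$ itself and not the much weaker quantity $(\sum_i I_i(A))(\sum_i I_i(B))$. This forces the Cauchy-Schwarz step to preserve the coordinatewise pairing $I_i(A)I_i(B)$, which is the source of the shape $\varphi(x)=x/\log(e/x)$: the logarithmic denominator emerges only at the optimal threshold $\rho^\ast\sim 1-1/\log(e/M)$, chosen to balance the near-diagonal regime (where hypercontractivity is weakest but the integrand is largest) against the bulk regime. Monotonicity is used twice, both to sign the semigroup integrands and to identify the level-$1$ Fourier coefficients with influences; dropping either of these identifications costs exactly the absolute-value factor that would degrade $\varphi$ from $x/\log(e/x)$ to the trivial $x^2$.
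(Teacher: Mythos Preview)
The paper does not prove this theorem. Theorem~\ref{Thm:Talagrand-Correlation} is quoted from Talagrand~\cite{Talagrand-Correlation} and used as a black box; the only new content in Section~\ref{sec:sub:correlation} is the one-line derivation of Proposition~\ref{Prop:Correlation} from it via the reduction. So there is no proof in the paper for your proposal to be compared against.

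As for the proposal itself: the semigroup identity and the nonnegativity of the integrand are correct, and this is a recognisable route to Talagrand's inequality. Note, however, that the paper explicitly remarks that Talagrand's own proof goes through the decoupled level-$2$ Fourier bound (Theorem~\ref{Thm:Talagrand-Bound}) rather than through a direct semigroup argument. More substantively, the role you assign to the $\rho\approx 1$ regime is unclear: Bonami--Beckner gives an \emph{upper} bound $\|T_\rho\partial_i g\|_2\le\|\partial_i g\|_{1+\rho^2}$, and after Cauchy--Schwarz this yields an upper bound on the integrand, not a lower bound. Since the integrand is nonnegative you may simply discard any $\rho$-interval; the real question is over how long an interval you can keep the integrand bounded below by a constant times $M$. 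The naive $L^2$ estimate $\|T_\rho\partial_i g-I_i(B)\|_2\le\rho\sqrt{I_i(B)}$, combined with Cauchy--Schwarz against $\|\partial_i f\|_2=\sqrt{I_i(A)}$, produces $\sum_i\sqrt{I_i(A)I_i(B)}$ rather than $M$ in the error term and does not give an interval of length $1/\log(e/M)$. Closing this gap is exactly where the substance of Talagrand's argument lies, and your outline does not yet supply it.
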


\noindent Using Theorem~\ref{Our-Theorem-Simple}, we
get the following generalization:
\begin{proposition}\label{Prop:Correlation}
Consider the discrete cube $\{0,1\}^n$ endowed with the measure
$\mu_p$, $p \leq 1/2$. For any pair of monotone families $A,B
\subset \{0,1\}^n$,
\[
\mu_p(A \cap B)-\mu_p(A)\mu_p(B) \geq c \varphi \left(\lfloor
\log(1/p) \rfloor \sum_{i=1}^n I_i(A) I_i(B) \right),
\]
where the influences are w.r.t. the measure $\mu_p$,
$\varphi(x)=x/\log(e/x)$, and $c$ is a universal constant.
\end{proposition}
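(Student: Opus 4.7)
The plan is to apply the reduction $g=Red(1_A)$, $g'=Red(1_B)$ and invoke Talagrand's original correlation inequality (Theorem~\ref{Thm:Talagrand-Correlation}) for the uniform measure on $\{0,1\}^{mn}$. Since for $p\le 1/2$ the map $h:\{0,1\}^m\to\{0,1\}$ is monotone (it depends monotonically on $Bin(y)$), the sets $A'=\{g=1\}$ and $B'=\{g'=1\}$ are monotone subsets of $\{0,1\}^{mn}$, and since the construction preserves expectations, $\mu_{1/2}(A')=\mu_p(A)$, $\mu_{1/2}(B')=\mu_p(B)$, and $\mu_{1/2}(A'\cap B')=\mu_p(A\cap B)$. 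Hence the left-hand side of the desired inequality equals the corresponding quantity for $A',B'$, and Theorem~\ref{Thm:Talagrand-Correlation} gives
\[
\mu_p(A\cap B)-\mu_p(A)\mu_p(B)\;\ge\;c\,\varphi\!\Bigl(\sum_{j=1}^{mn} I_j(A')\,I_j(B')\Bigr).
\]

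The next step is to relate $\sum_j I_j(A')I_j(B')$ to $\sum_i I_i(A)I_i(B)$. Because the blocks $y^1,\ldots,y^n$ are independent and $g(y)=f(h(y^1),\ldots,h(y^n))$, a short Fubini-type calculation shows that for every coordinate one has the clean factorization
\[
I_{(i-1)m+j}(g)\;=\;I_j(h)\cdot I_i(f),
\]
where $I_j(h)$ denotes the uniform-measure influence of the $j$-th bit on $h$. Consequently
\[
\sum_{j=1}^{mn} I_j(A')I_j(B')\;=\;\Bigl(\sum_{j=1}^{m} I_j(h)^2\Bigr)\cdot\sum_{i=1}^{n} I_i(A)I_i(B),
\]
and it remains to lower-bound the scalar $\sum_{j=1}^m I_j(h)^2$. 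A direct analysis of $h$ shows that the top bit has $I_1(h)=2p$, and more generally that for each $j\le \lfloor\log(1/p)\rfloor$ the $j$-th bit is forced to $1$ on all $y$ with $h(y)=1$, which gives $I_j(h)\ge c\,p$ for a universal constant $c$; summing over the top $\lfloor\log(1/p)\rfloor$ bits yields $\sum_{j=1}^m I_j(h)^2\ge c'\,p^2\lfloor\log(1/p)\rfloor$.

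Substituting this lower bound, and using monotonicity of $\varphi(x)=x/\log(e/x)$ on the regime in which the claimed inequality is non-trivial (i.e.\ where the argument of $\varphi$ is below $e$), yields the desired inequality, with the non-diadic case handled as in the other applications via Proposition~\ref{Prop:Lower-Bound-Approximation}. The main obstacle will be this last comparison: the reduction produces $\varphi$ evaluated at an argument of order $p^2\lfloor\log(1/p)\rfloor\sum I_i(A)I_i(B)$, whereas the proposition writes $\lfloor\log(1/p)\rfloor\sum I_i(A)I_i(B)$, so one must check that in the non-trivial range both arguments are bounded away from $e$ and the resulting ratio of logarithmic factors $\log(e/(p^2 L X))/\log(e/(L X))$ is absorbed into a universal constant — equivalently, the statement is most naturally read in terms of the boundary weights $\mu_p(A_i)=p\,I_i(A)$, after which the $p^2$ factor disappears from the argument of $\varphi$.
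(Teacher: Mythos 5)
Your reduction argument is sound and, at the top level, follows the same strategy as the paper: apply Talagrand's uniform-measure correlation theorem to $A'=\{Red(1_A)=1\}$ and $B'=\{Red(1_B)=1\}$, using that $h$ is monotone and that the reduction preserves the joint distribution, so the covariances agree. Where you genuinely differ is the middle step. You compare $\sum_j I_j(A')I_j(B')$ with $\sum_i I_i(A)I_i(B)$ via the exact factorization $I_{(i-1)m+j}(g)=I_j(h)\,I_i(f)$ (correct, by independence of the blocks) together with $I_j(h)=2p$ for all $j\le\lfloor\log(1/p)\rfloor$, whereas the paper goes through Fourier coefficients: the coefficient-wise (hence decoupled) form of Theorem~\ref{Our-Theorem-Simple} at $d=1$, i.e.\ Equation~(\ref{Eq2.9}), combined with $\hat f(\{i\})=-\sqrt{p(1-p)}\,I_i(f)$ for monotone $f$ under $\mu_p$ and $I_j(g)=2|\hat g(\{j\})|$ under $\mu_{1/2}$. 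Both routes give exactly the same output, namely $\sum_j I_j(A')I_j(B')\ge 4p^2\lfloor\log(1/p)\rfloor\sum_i I_i(A)I_i(B)$, and hence a bound of the form $c\,\varphi\bigl(c'\,p^2\lfloor\log(1/p)\rfloor\sum_i I_i(A)I_i(B)\bigr)$; your influence-factorization step is arguably the more elementary and self-contained of the two.

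The problem is the final comparison, and your first proposed fix cannot work. The gap between $\varphi\bigl(p^2\lfloor\log(1/p)\rfloor X\bigr)$ and $\varphi\bigl(\lfloor\log(1/p)\rfloor X\bigr)$, with $X=\sum_i I_i(A)I_i(B)$, is not a ratio of logarithms: since $\varphi(x)=x/\log(e/x)$ is linear in $x$ up to logarithmic corrections, the two differ by a multiplicative factor of order $p^2$, which no universal constant absorbs. Indeed, with the literal pivotal-probability normalization of $I_i$ the displayed inequality fails outright: take $A=\{x_1=1\}$ and $B=\{x_1=x_2=1\}$, so that $\mu_p(A\cap B)-\mu_p(A)\mu_p(B)=p^2(1-p)$ while $\sum_i I_i(A)I_i(B)=p$ and $\varphi\bigl(\lfloor\log(1/p)\rfloor p\bigr)$ is of order $p$; for small $p$ no universal $c$ survives. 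Your closing remark is the correct reconciliation: what the reduction (and equally the paper's own one-line argument) actually proves is the bound with argument $\lfloor\log(1/p)\rfloor\sum_i\mu_p(A_i)\mu_p(B_i)=p^2\lfloor\log(1/p)\rfloor X$ inside $\varphi$ (up to the factor $4$), i.e.\ the statement should be read in terms of the boundary weights $\mu_p(A_i)=p\,I_i(A)$; as literally written it is not provable by this method or any other. A minor shared gloss: both you and the paper invoke monotonicity of $\varphi$ only on $(0,1]$, so one should cap the argument there before comparing, but that is routine.
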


\noindent Since for any monotone function $f$, $\hat f(\{i\}) =
-\sqrt{p(1-p)} I_i(f)$, the proposition follows immediately from
Theorem~\ref{Our-Theorem-Simple} using the monotonicity of the
function $\varphi(x)$ in $(0,1]$.

\medskip

\noindent We note that a slightly better result was obtained
in~\cite{Keller-Thesis} by transforming Talagrand's proof of
Theorem~\ref{Thm:Talagrand-Correlation} to the biased measure
$\mu_p$.

\section{Discussion}
\label{sec:discussion}

The standard reduction discussed in this paper was considered and
used in numerous papers. It was first suggested in~\cite{BKKKL},
as a transformation from a function $f:[0,1]^n \rightarrow
\{0,1\}$ to a function $g:\{0,1\}^{mn} \rightarrow \{0,1\}$. The
reduction was used there to generalize the KKL theorem~\cite{KKL}
to the continuous cube endowed with the Lebesgue measure by
obtaining a lower bound on the maximal influence.
In~(\cite{Friedgut-Kalai}, Theorem~3.1) the reduction was adapted
to the biased measure on the discrete cube (yielding
Theorem~\ref{Thm:Lower-Bound} above), and used to prove that any
monotone graph property has a sharp threshold. In~\cite{Friedgut2}
the technique of~\cite{BKKKL} was simplified and extended to the
context of influences toward one and zero.
In~\cite{Bollobas-Riordan} the reduction was applied to a product
measure on $\{0,1,2\}^n$ and used in computing the critical
probability of random Voronoi percolation in the plane.
In~\cite{Grimmett} the technique of~\cite{BKKKL} was generalized
to arbitrary product spaces satisfying some separability
condition. Finally, in~\cite{Keller-Cont-Influence} the reduction
was used to generalize several statements from the discrete cube
to the continuous cube: a lower bound on the vector of influences
obtained by Talagrand~\cite{Talagrand1}, Friedgut's theorem
characterizing functions with a low sum of
influences~\cite{Friedgut1}, and a lower bound on the size of the
boundary of monotone subsets of the discrete cube due to
Talagrand~\cite{Talagrand3} (which is weaker than
Theorem~\ref{Thm:Talagrand-Boundary} obtained by
Talagrand~\cite{Talagrand-Boundary} later).

The common feature in all these results is that they consider only
{\it monotone two-valued} functions, and the only relation between
$f$ and $g$ they obtain is a {\it lower bound} on the vector of
influences. Our paper shows that when we restrict ourselves to the
biased measure on the discrete cube (rather than the Lebesgue
measure on the continuous cube and other measure spaces considered
in the above papers), the reduction can be proved to be much more
powerful. We obtain a direct relation between the Fourier-Walsh
coefficients of $f$ and $g$, that enables us to consider {\it
general} functions (i.e., not necessarily monotone or two-valued),
and to obtain {\it upper bounds} in addition to the previously
known lower bounds.

It is interesting to find out whether the results of our paper can
be leveraged to the general reduction considering the continuous
cube with the Lebesgue measure. It seems that at least some of the
results do not generalize. For example, the characterization of
functions with a low sum of influences, that was proved by
Friedgut~\cite{Friedgut1} for the biased measure on the discrete
cube, fails for non-monotone functions on the continuous cube, as
shown by Hatami~\cite{Hatami2}.

\medskip

It should be mentioned that the reduction technique we consider is
naturally bounded. Since the transformation from $f$ cannot lead
to all possible functions $g$, the reduction does not yield a
tight result if the functions for which the respective claim is
tight for the uniform measure are outside the range of the
transformation. For example, consider the claim
\begin{equation} \label{Eq4.1}
\sum_{i=1}^n \hat f(\{i\})^2 \leq \mathbb{E}[f](1-\mathbb{E}[f]),
\end{equation}
that holds for any Boolean function w.r.t. any measure $\mu_p$ (as
follows immediately from the Parseval identity). The claim is
tight up to a constant factor for a balanced threshold function w.r.t. any measure
$\mu_p$, while the reduction yields the much weaker bound
\[
\sum_{i=1}^n \hat f(\{i\})^2 \leq \frac{1-p}{p \lfloor \log(1/p)
\rfloor} \mathbb{E}[f](1-\mathbb{E}[f]).
\]
This happens since the inequality~(\ref{Eq4.1}) is tight only for
functions having all their non-zero Fourier-Walsh coefficients on
the first level (i.e., $|S| = 1$), while the function $g$ has at
most fraction of $\frac{3p \lfloor \log(1/p) \rfloor}{1-p}$ of the
total Fourier weight on the first level by its construction (see
Remark~\ref{Rem:Lower1}).

This is the reason why Propositions~\ref{Prop:BKS-lemma}
and~\ref{Prop:Correlation} are slightly weaker than the results
obtained in~\cite{Keller-Kindler} and in~\cite{Keller-Thesis}
(respectively) by adapting the proof of the uniform measure case
to the biased measure. In both cases, the upper bound we obtain by
the reduction is weaker only for functions $f$ for which
$\sum_{i=1}^n \hat f(\{i\})^2 \gg p \lfloor \log(1/p) \rfloor $.

\medskip

The main open question left in understanding the reduction from
$\mu_p$ to $\mu_{1/2}$ is whether one can obtain an effective {\it
upper bound} on the $d$-th level Fourier-Walsh coefficients of $g$
in terms of the coefficients of $f$. Such bound for $d=1$ is given
in Remark~\ref{Rem:Lower1}, and similar (but more cumbersome)
bounds can be computed for all ``small'' values of $d$. However,
as $d$ grows, the exact formula becomes more complex and hard
to work with. A ``good'' upper bound for large values of $d$ can
lead to a simple generalization to the biased measure of the lower
bounds on the Fourier tail of Boolean and general bounded
functions~\cite{Bourgain,DFKO}, \footnote{We note that
generalizations to a biased measure of this bound for Boolean functions were obtained
by Kindler~\cite{Kindler-Thesis} and by Hatami~\cite{Hatami1} by
adapting the proof to the biased measure.} and maybe also of other
results.

\section{Acknowledgements}

We are grateful to Gil Kalai, Guy Kindler, and Elchanan Mossel for
fruitful discussions, and to an anonymous referee for suggesting a
significant simplification of the proof of
Proposition~\ref{Prop:Easy}.

\end{document}